\documentclass{amsart}
\topmargin=0in
   \oddsidemargin=0in
   \evensidemargin=0in
   \textwidth=6.5in
   \textheight=8.5in
\usepackage{amsfonts,amscd,amssymb,amsmath,amsthm,mathdots,upgreek}
\usepackage{stmaryrd}
\SetSymbolFont{stmry}{bold}{U}{stmry}{m}{n}
\usepackage{graphicx,caption,subcaption,mathrsfs,appendix}
\usepackage{color}
\usepackage{enumerate}
\usepackage{bm}
\usepackage[colorlinks=true,linkcolor=blue]{hyperref}

\numberwithin{equation}{section}

\usepackage{xparse}
\definecolor{airforceblue}{rgb}{0.36, 0.54, 0.66}
\definecolor{amethyst}{rgb}{0.6, 0.4, 0.8}
\definecolor{applegreen}{rgb}{0.55, 0.71, 0.0}

\def\corA{}

\def\corAB{}

\definecolor{purple}{rgb}{0.9,0,0.8}

\newcommand{\abbr}[1]{{\sc\lowercase{#1}}}
\begin{document}

\newtheorem{theorem}{Theorem}
\newtheorem{proposition}[theorem]{Proposition}
\newtheorem{conjecture}[theorem]{Conjecture}
\newtheorem{corollary}[theorem]{Corollary}
\newtheorem{lemma}[theorem]{Lemma}
\theoremstyle{definition}
\newtheorem{dfn}{Definition}
\newtheorem{assumption}[theorem]{Assumption}
\newtheorem{claim}[theorem]{Claim}
\newtheorem{remark}[theorem]{Remark}

\numberwithin{theorem}{section}
\numberwithin{dfn}{section}

%%%%%%%%%%%%%%%%%%%%%%%%%%%%%%%%%%%%%%%%%%
%%% General macros
%%%%%%%%%%%%%%%%%%%%%%%%%%%%%%%%%%%%%%%%%%

\newcommand{\B}{\mathbb{B}}
\newcommand{\R}{\mathbb{R}}
\newcommand{\T}{\mathcal{T}}
\newcommand{\C}{\mathbb{C}}
\newcommand{\D}{\mathbb{D}}
\newcommand{\G}{\mathcal{G}}
\newcommand{\Z}{\mathbb{Z}}
\newcommand{\Q}{\mathbb{Q}}
\newcommand{\E}{\mathbb E}
\renewcommand{\P}{\mathbb P}
\newcommand{\N}{\mathbb N}
\newcommand{\gd}{\mathfrak{d}}
\newcommand{\gb}{\mathfrak{b}}
\newcommand{\gL}{\mathfrak{L}}
\newcommand{\vep}{\varepsilon}
\newcommand{\cS}{\mathcal{S}}
\newcommand{\cI}{\mathcal{I}}
\newcommand{\cY}{\mathcal{Y}}
\newcommand{\cB}{\mathcal{B}}
\newcommand{\cM}{\mathcal{M}}
\newcommand{\cN}{\mathcal{N}}
\newcommand{\cX}{\mathcal{X}}
\newcommand{\frakg}{\mathfrak{g}}
\newcommand{\supp}{{\mbox{\rm Supp }}}

\newcommand{\barray}{\begin{eqnarray*}}
\newcommand{\earray}{\end{eqnarray*}}

\newcommand{\dvec}[1]{ \llbracket #1 \rrbracket}

\newcommand{\Def}{:=}

%%%%%%%%%%%%%%%%%%%%%%%%%%%%%%%%%%%%%%%%%%
%%% Probability Macros
%%%%%%%%%%%%%%%%%%%%%%%%%%%%%%%%%%%%%%%%%%

\DeclareDocumentCommand \Pr { o }
{%
\IfNoValueTF {#1}
{\operatorname{Pr}  }
{\operatorname{Pr}\left[ {#1} \right] }%
}
\newcommand{\Prob}{\Pr}
\newcommand{\Exp}{\mathbb{E}}
\newcommand{\expect}{\mathbb{E}}
\newcommand{\1}{\one}
\newcommand{\Pto}{\overset{\mathbb{P}}{\to} }
\newcommand{\weakto}{\Rightarrow}
\newcommand{\lawequals}{\overset{\mathcal{L}}{=}}
\newcommand{\prob}{\Pr}
\newcommand{\pr}{\Pr}
\newcommand{\filt}{\mathscr{F}}
\newcommand{\ohadI}{\mathbbm{1}}
\DeclareDocumentCommand \one { o }
{%
\IfNoValueTF {#1}
{\ohadI }
{\ohadI\left\{ {#1} \right\} }%
}
\newcommand{\sgn}{\operatorname{sgn}}
\newcommand{\Bernoulli}{\operatorname{Bernoulli}}
\newcommand{\Binomial}{\operatorname{Binom}}
\newcommand{\Binom}{\Binomial}
\newcommand{\Poisson}{\operatorname{Poisson}}
\newcommand{\Exponential}{\operatorname{Exp}}

\newcommand{\Var}{\operatorname{Var}}
\newcommand{\Cov}{\operatorname{Cov}}

%%%%%%%%%%%%%%%%%%%%%%%%%%%%%%%%%%%%%%%%%%
%%% Random Matrix Macros
%%%%%%%%%%%%%%%%%%%%%%%%%%%%%%%%%%%%%%%%%%

\newcommand{\Id}{\operatorname{Id}}
\newcommand{\diag}{\operatorname{diag}}
\newcommand{\tr}{\operatorname{tr}}
\newcommand{\proj}{\operatorname{proj}}
\newcommand{\Span}{\operatorname{span}}

%%%%%%%%%%%%%%%%%%%%%%%%%%%%%%%%%%%%%%%%%%
%%% Paper-Specific Macros
%%%%%%%%%%%%%%%%%%%%%%%%%%%%%%%%%%%%%%%%%%

\newcommand{\nicefrac}{\frac}
\newcommand{\half}{\frac12}
\DeclareDocumentCommand \JB { O{n} O{\lambda} } {J_{{#1}}({#2})}

\DeclareDocumentCommand \LP { O{\ESD} } {U_{ {#1} }}
\newcommand{\LPL}{ \LP[{\mu_N}] }

\newcommand{\ESD}{ L_N^{\model} }

\newcommand{\model}{\mathcal{M}}
\newcommand{\SVD}{\Sigma}
\newcommand{\LL}{\mathcal{L}}
\newcommand{\PI}{\Pi}
\DeclareDocumentCommand \PG { O{n} }
{
\mathfrak{S}_{{ #1 }}
}
\newcommand{\RS}{\mathcal{C}}

\newcommand{\TODO}[1]{ {\bf TODO: #1} }

\newcommand{\row}{X}
\newcommand{\col}{Y}
\newcommand{\srow}{x}
\newcommand{\scol}{y}
\newcommand{\csrow}{w}
\newcommand{\cscol}{z}
\newcommand{\COMP}[1]{ \check{#1} }

\date{December 14, 2018. Revised November 7, 2019.}
\title[Spectrum of random perturbation of Toeplitz] {Spectrum of random perturbations of Toeplitz matrices with finite symbols}
%and twisted Toeplitz cases}
%\shortitle{Regularization of non-normal matrices}
\author[A.\ Basak]{Anirban Basak$^*$}%\thanks{${}^*$Partially supported by
 %grant 147/15 from the Israel Science Foundation, Start-up Research Grant (SRG/2019/001376) from Science and Engineering Research Board of Govt.~of India, and ICTS--Infosys Excellence Grant.}
 \address{$^*$International Center for Theoretical Sciences
\newline\indent Tata Institute of Fundamental Research
\newline\indent Bangalore 560089, India
\newline\indent and
\newline\indent Department of Mathematics, Weizmann Institute of Science
\newline\indent POB 26, Rehovot 76100, Israel}
 \author[E.\ Paquette]{Elliot Paquette$^\ddagger$}%\thanks{${}^\ddagger$Partially supported by NSF postdoctoral fellowship DMS-1606310}
 \address{$^\ddagger$Department of Mathematics, The Ohio State University  
 \newline\indent Tower 100, 231 W 18th Ave, Columbus, Ohio 43210, USA}
\author[O.\ Zeitouni]{Ofer Zeitouni$^{\mathsection}$}%\thanks{${}^{\mathsection}$Partially 
%supported by  grant 147/15 from the Israel Science Foundation}
\address{$^{\mathsection}$Department of Mathematics, Weizmann Institute of Science 
 \newline\indent POB 26, Rehovot 76100, Israel
 \newline \indent and
 \newline\indent Courant Institute, New York University
 \newline \indent 251 Mercer St, New York, NY 10012, USA}
%\author{Anirban Basak, Elliot Paquette and Ofer Zeitouni}
%  \address{Department of Mathematics, Weizmann Institute of Science}}
%  \author{Elliot Paquette
%    \address{Faculty fo Mathematics, Ohio State University}}
%    \author{Ofer Zeitouni
%\address{Department of Mathematics, Weizmann Institute of Science}
%\address{Faculty of Mathematics, Ohio State University}
  %\address{Department of Mathematics, Weizmann Institute of Science}

%\thanks{This work was partially supported by a grant from the Israel
%Science Foundation}

\begin{abstract}
  Let $T_N$ denote an $N\times N$ Toeplitz matrix with finite, $N$ independent
  symbol ${\bm a}$. For $E_N$ a noise matrix satisfying mild assumptions (ensuring, in particular, that $\corA{N^{-1/2}\|E_N\|_{{\rm HS}}}\to_{N\to\infty} 0$ at a polynomial rate), 
  we prove that the empirical measure
  of eigenvalues of $T_N+E_N$ converges to the law of ${\bm a}(U)$, where $U$
  is uniformly distributed on the unit circle in the complex plane. This extends results from \cite{BPZ} to the non-triangular setup and non complex Gaussian noise,
  and confirms
  predictions obtained in \cite{trefethen} using the notion of pseudospectrum.
\end{abstract}

%Ofer
\maketitle

%\tableofcontents

\section{Introduction}
Let $\mathbb{S}^1:=\{z \in \C: |z| =1\}$ denote
the unit circle in the complex plane. Let ${\bm a}: \mathbb{S}^1 \mapsto \C$ be a function given by
\[
{\bm a }(\lambda) := \sum_{k=-\infty}^\infty a_k \lambda^k, \qquad \lambda \in \mathbb{S}^1,
\]
where $\{a_k\}_{k=-\infty}^\infty$ is an absolutely summable complex valued sequence. 
We denote by 
$T_N:=T_N({\bm a})$  the Toeplitz matrix of dimension $N \times N$ with symbol ${\bm a}$, given by
\[
T_N:=\begin{bmatrix}
a_{0} & a_{1} & a_2 & \cdots & \cdots & a_{N-1}\\
a_{-1}& a_{0} & a_1 &\ddots &  & \vdots\\
a_{-2} &a_{-1}& \ddots & \ddots & \ddots & \vdots \\
\vdots & \ddots & \ddots & \ddots & a_1 & a_2\\
\vdots & & \ddots & a_{-1} & a_0 & a_1\\
a_{-(N-1)} & \cdots & \cdots & a_{-2} &a_{-1} & a_{0}
\end{bmatrix}.
\]
From the definition it is clear that when ${\bm a}$ is a Laurent polynomial, i.e.
\[
{\bm a}(\lambda)= \sum_{k=-d_2}^{d_1} a_k \lambda^k, \qquad \text{ for some } d_1, d_2 \ge 0,
\]
then
$T_N$ is a (finitely) banded Toeplitz matrix which can be thought of as
a piece
from an infinite Toeplitz matrix; we refer to such matrices as
%Such matrices will be called 
Toeplitz matrices with finite symbols. 

For any $N \times N$ matrix $A_N$ we denote the empirical measure 
of its eigenvalues, or equivalently \abbr{ESD}, the empirical spectral 
distribution, by $L_N^A$. That is, 
\[
L_N^A:= \frac{1}{N}\sum_{i=1}^n \delta_{\lambda_i},
\]
where $\lambda_1, \lambda_2, \ldots, \lambda_N$ are the eigenvalues of $A_N$.
In this paper, we find the limit of the empirical spectral distribution 
(\abbr{ESD}) of random perturbations of Toeplitz matrices with finite symbols.
This 
generalizes those results in 
\cite{BPZ} that deal with \textit{triangular} 
Toeplitz matrices with finite symbols (and also with
twisted Toeplitz matrices, which we cannot generalize to the 
non-triangular case, see
Remarks \ref{rem-twisted} and \ref{rem-ell} below). 
In contrast with \cite{BPZ}, we allow
for rather general perturbations, as codified in 
Assumption \ref{ass:noise-matrix}.
\begin{assumption}\label{ass:noise-matrix}
Let $\{E_N\}_{N \in \N}$ be a sequence of matrices, with possibly complex valued entries, such that the followings hold:
\begin{enumerate}
\item[(i)]
\[
\E \left[\sum_{i,j=1}^N |e_{i,j}|^2 \right] = O(N^2),
\]
where $\{e_{i,j}\}_{i,j=1}^N$ are the entries of $E_N$. 
\item[(ii)] For any $\alpha \in (0,\infty)$, there exists a $\beta \in (0,\infty)$, depending only on $\alpha$, so that for any fixed deterministic matrix $M_N$ with $\|M_N\| =O(N^\alpha)$, we have
\[
\P\left(s_{\min}(E_N+ M_N) \le N^{-\beta} \right) = o(1). 
\] 
\end{enumerate}
\end{assumption}

Let $\mu_{\bm a}$ denote the law of
${\bm a}(U)$, where $U$ is a random variable 
uniformly distributed on the unit circle in the complex plane. 
Equipped with Assumption \ref{ass:noise-matrix} we now state the main result of this paper. 
\begin{theorem}\label{thm:main}
Let $T_N$ be any $N \times N$ Toeplitz matrix with a symbol ${\bm a}$, where ${\bm a}$ is a Laurent polynomial. Assume that $\{E_N\}_{N \in \N}$ satisfy Assumption \ref{ass:noise-matrix}. Then, for any $\gamma >\frac12$,
the \abbr{ESD} $L_N^{T+N^{-\gamma}E}$
of $T_N+N^{-\gamma}E_N$ converges weakly, in probability, to $\mu_{\bm a}$. 
\end{theorem}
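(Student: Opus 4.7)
The plan is to apply Girko's Hermitization method. Define, for $z \in \C$,
\[
U_N(z) \Def \frac{1}{N} \sum_{i=1}^N \log s_i(z), \qquad U(z) \Def \int_0^{2\pi} \log\bigl|{\bm a}(e^{i\theta}) - z\bigr|\,\frac{d\theta}{2\pi},
\]
where $s_1(z) \ge \cdots \ge s_N(z) \ge 0$ are the singular values of $T_N + N^{-\gamma}E_N - zI$. Since $U(z) = \int \log|z-w|\, d\mu_{\bm a}(w)$, weak convergence in probability of the \abbr{ESD} to $\mu_{\bm a}$ will follow once one establishes $U_N(z) \to U(z)$ in probability for Lebesgue-almost every $z \in \C$, together with a uniform integrability statement in $(z,\omega)$ that validates integration by parts against a $C_c^2$ test function. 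This reduces the non-Hermitian eigenvalue problem to an analysis of the singular value distribution of $T_N + N^{-\gamma}E_N - zI$.

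The bulk of the singular value distribution is the easier part. Classical Toeplitz asymptotics (Szeg\H{o}/Avram--Parter for banded Toeplitz matrices) identify the weak limit of the empirical singular value distribution of $T_N - zI$ as the law of $|{\bm a}(U) - z|$ with $U$ uniform on $\mathbb{S}^1$. Assumption~\ref{ass:noise-matrix}(i) gives $\E\|E_N\|_{\mathrm{HS}}^2 = O(N^2)$, so that $N^{-\gamma}\|E_N\|_{\mathrm{HS}}$ tends to $0$ in probability whenever $\gamma > 1/2$; the Hoffman--Wielandt inequality then implies that the perturbation $N^{-\gamma}E_N$ leaves this limit unchanged. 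In particular, for any bounded continuous $g$,
\[
\frac{1}{N}\sum_{i=1}^N g(s_i(z)) \Pto \int_0^{2\pi} g\bigl(|{\bm a}(e^{i\theta}) - z|\bigr)\,\frac{d\theta}{2\pi}.
\]

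The heart of the matter concerns the logarithmic singularity at $0$. For $z$ lying in the region enclosed by ${\bm a}(\mathbb{S}^1)$ with nonzero winding number, $T_N - zI$ possesses a macroscopic family of singular values decaying exponentially to zero, so that the noise is precisely what regularizes the log-integral, and the bulk convergence above is not enough. Applying Assumption~\ref{ass:noise-matrix}(ii) to the deterministic matrix $M_N \Def N^\gamma(T_N - zI)$, whose operator norm is $O(N^\gamma)$, yields
\[
s_N(z) \ge N^{-\gamma - \beta(\gamma)} \qquad \text{with probability } 1 - o(1),
\]
a polynomial lower bound on the smallest singular value. The remaining and substantive step, which is the main obstacle, is a tail estimate of the form $\#\{i : s_i(z) \le N^{-t}\} = o(N)$ with high probability for every fixed $t > 0$; together with the lower bound on $s_N(z)$, this delivers the uniform integrability of $\log s$ against the singular value distribution and yields $U_N(z) \to U(z)$.

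The tail estimate is where the transition from the triangular case of \cite{BPZ} to general finite symbols demands new work. In the triangular case the approximate null vectors of $T_N - zI$ decay in one direction only, and anti-concentration of $E_N$ on a shifted low-rank subspace suffices. For a general Laurent polynomial symbol, approximate null vectors are governed by the roots of ${\bm a}(\lambda) = z$: roots with $|\lambda| < 1$ yield vectors decaying from one end of the index set, and roots with $|\lambda| > 1$ yield vectors decaying from the other, and both families must be controlled simultaneously. The plan is to combine a quantitative two-sided description of these approximate kernels with repeated applications of Assumption~\ref{ass:noise-matrix}(ii) to Schur complements of $N^\gamma(T_N + N^{-\gamma}E_N - zI)$ against carefully chosen subspaces spanned by approximate forward- and backward-decaying vectors. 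Executing this simultaneous control and routing the resulting in-probability bounds through dominated convergence in $z$ on compact sets, away from a Lebesgue-null exceptional set, closes the Girko argument.
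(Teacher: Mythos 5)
There is a genuine gap, and it lies exactly where you flag ``the main obstacle.'' Your plan reduces Theorem \ref{thm:main} to (i) bulk convergence of the singular-value \abbr{ESD}, (ii) a polynomial lower bound on $s_{\min}$ from Assumption \ref{ass:noise-matrix}(ii), and (iii) a tail bound $\#\{i : s_i(z) \le N^{-t}\} = o(N)$. Items (i) and (ii) are fine and essentially appear in the paper (the moment computation verifying condition (b) of Theorem \ref{thm:GWZ}, and the use of Assumption \ref{ass:noise-matrix}(ii) with $M_N = N^\gamma(A_N - z\Id_N)$). But (iii) is precisely the step the authors say does not survive passage from triangular to general banded symbols: the intermediate singular values of $T_N(z) + N^{-\gamma}E_N$ are inherited from the deterministic Toeplitz $T_N(z)$, and the \cite{BPZ}-style counting argument for these fails once the matrix is not triangular. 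Your sketch of how to get (iii) --- Schur complements of $N^\gamma(T_N + N^{-\gamma}E_N - zI)$ against ``carefully chosen subspaces spanned by forward- and backward-decaying vectors'' --- is not a proof and does not address why the triangular argument breaks or what replaces it. It is precisely the point where new ideas are needed, and the proposal does not supply them. (Also, the assertion that $T_N - zI$ has a \emph{macroscopic} family of exponentially small singular values for $z$ with nonzero winding is incorrect: for a fixed Laurent polynomial symbol the number of such singular values is bounded --- of order $|{\rm wind}({\bm a} - z)|$ --- not a positive fraction of $N$.)

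The paper takes a genuinely different route, which sidesteps the singular-value counting problem entirely. First (Theorem \ref{thm:existence-of-Delta}) it constructs an explicit random corner perturbation $\Delta_N$ with polynomially small norm, and proves convergence of ${\mathcal L}_{L_N^{T+\Delta}}(z)$ by expanding $\det(T_N(z)+\Delta_N)$ via the Cauchy--Binet-type identity \eqref{eq:det_decomposition}, reducing to determinants of sub-matrices of the banded Toeplitz, which are controlled by Widom's formula and the bidiagonal factorization $T_{N+d_2}(z;d,0)=a_{d_1}\prod_\ell(J_{N+d_2}+\lambda_\ell(z)\Id)$; an anti-concentration bound (Proposition \ref{prop:anti-conc}) gives the matching lower bound on the dominant term. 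Second (Theorem \ref{thm:GWZ}) it proves a replacement principle: once one has a single $\Delta_N$ for which the log-potential converges (assumption (c)), the Stieltjes-transform comparison \eqref{eq:stieltjes-diff} lets one transfer the control of the log-integral near $0$ from $\nu^z_{A_N+\Delta_N}$ to $\nu^z_{A_N+N^{-\gamma}E_N}$, with Assumption \ref{ass:noise-matrix}(ii) controlling only the very bottom of the spectrum. The replacement principle thus does the work you wanted step (iii) to do, but by leveraging known convergence for $\Delta_N$ rather than by directly bounding small singular values of the noisy matrix. If you want to pursue your Hermitization route, the step you must fill in is exactly this: either a working substitute for the intermediate small-singular-value bound for non-triangular $T_N(z)$, or an argument like the paper's replacement principle that avoids needing it.
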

Assumption \ref{ass:noise-matrix}(i) holds as soon 
as the second moment of each of the 
entries (of both complex and real parts) is uniformly bounded. By
\cite[Theorem 2.1]{TV08}, 
whenever the entries of $E_N$ are i.i.d. (complex or real)
with common $N$-independent
distribution having a finite variance, 
Assumption \ref{ass:noise-matrix}(ii) holds. Therefore, Theorem 
\ref{thm:main} holds in that setup. In the next remark, we summarize other cases where Assumption \ref{ass:noise-matrix}, and hence Theorem \ref{thm:main}, hold. 
%  if 
%$E_N$ is a matrix whose entries are i.i.d. (complex or real) 
%and come from an
%$N$-independent fixed distribution $\mu$ possessing finite second moments.
%\begin{corollary}\label{cor:main}
%Let $T_N$ be any $N \times N$ Toeplitz matrix with a symbol ${\bm a}$, where ${\bm a}$ is a Laurent polynomial. Assume that the entries of $E_N$ are i.i.d.~with finite second moment. Then, for any $\gamma >\frac12$ the \abbr{ESD} of $T_N+N^{-\gamma}E_N$ converges weakly, in probability, to the law of ${\bm a}(U)$, where $U$ as in Theorem \ref{thm:main}. 
%\end{corollary}

%In the next remark, we collect other cases where Assumption \ref{ass:noise-matrix}, and hence Theorem \ref{thm:main}, hold.
\begin{remark}
  Assumption \ref{ass:noise-matrix} 
  %The conclusion of Corollary \ref{cor:main} 
  holds under various relaxed assumptions on the noise matrix $E_N$, which we list below.
  \begin{enumerate}
    \item  
%      Assumption \ref{ass:noise-matrix} holds
      When the entries of $E_N$ are independent and dominated by a single distribution (in the Fourier-analytic sense) that has a $\kappa$-controlled 
      second moment for some $\kappa >0$, see \cite[Definition 2.2 and
      Remark 2.8]{TV08}.
     % (For the definition the latterof a random variable with a $\kappa$-controlled second moment we refer the reader to \cite[Definition 2.2]{TV08}.
    \item
When the entries of $E_N$ are independent, satisfy a uniform 
anti-concentration bound near $0$, and have uniform lower bound on the truncated variance, see \cite[Lemma A.1]{BC}. 
Furthermore, \cite[Theorem 2.9]{TV08} and \cite[Lemma A.1]{BC} allow $E_N$ to be a sparse random matrix. 
\item
When the entries of $E_N$ have an inhomogeneous variance profile satisfying 
appropriate assumptions, by a recent result of Cook \cite{cook}.
Specifically, by \cite[Theorem 1.24]{cook},
the assumption
is satisfied 
%Corollary \ref{cor:main} extends to the case 
when the variance profile is 
{\em super-regular}, see \cite[Definition 1.23]{cook} for a precise formulation.
\item 
  When $E_N=\sqrt{N} U_N$, where $U_N$ is a Haar distributed unitary matrix,
  see \cite[Theorem 1.1]{RV-JAMS}.
  \end{enumerate}
\end{remark}
\begin{remark} We believe
  that the sequence $N^{-\gamma}$ in Theorem \ref{thm:main}
  can be replaced by any sequence
  $\mathfrak{a}_N$ satisfying $\sqrt{N} \mathfrak{a}_N\to_{N\to\infty} 0$. We chose 
  to work with $N^{-\gamma}$ in order to somewhat simplify the proofs.
\end{remark}
\begin{remark}
  A general notion developed to deal with perturbations of non-normal matrices is that of pseudospectrum, see \cite{trefethen1} for an extensive review. 
  This notion provides worse-case estimates and does not focus on the evaluation of  limits of empirical measures under random perturbation. However,
    Theorem \ref{thm:main} is consistent
    with predictions based on pseudospectrum.
    For a thorough discussion of how pseudospectrum relates
    to Theorem \ref{thm:main}, see \cite[Section 1.3]{BPZ}
    and \cite{trefethen}.
\end{remark}
Our approach to the proof of Theorem \ref{thm:main} differs from the one
employed in \cite{BPZ}, which derived a deterministic equivalence that worked only for complex i.i.d.~Gaussian perturbations (in particular, even real
Gaussian perturbations are not covered by \cite{BPZ}). Instead, our 
approach is based on a perturbation idea that can be traced back in this context to \cite{GWZ}. See Section \ref{sec:discussion} below for a further discussion on this. 

To describe the approach of this paper we first 
recall the important notion of 
logarithmic potential associated with a probability measure $\mu$.
\begin{dfn}[Log-potential]
For a probability measure $\mu$ supported on the complex plane define its log-potential as follows:
\[
\mathcal{L}_\mu(z):= \int \log |z - x| d\mu(x), \qquad z \in \C.
\]
\end{dfn}

%To prove Theorem \ref{thm:main} 
As a first step,
we will show that there exists a random matrix $\Delta_N$, 
with a polynomially decaying spectral norm, 
such that the conclusion of Theorem \ref{thm:main} holds
with $N^{-\gamma}E_N$ replaced by $\Delta_N$. 
%Then the proof of Theorem \ref{thm:main} follows from a general replacement result. First let us state the result when the noise matrix is $\Delta_N$. 
\begin{theorem}\label{thm:existence-of-Delta}
Let $T_N$ be any $N \times N$ Toeplitz matrix with a symbol ${\bm a}$, where ${\bm a}$ is a Laurent polynomial. Then, there exists a random matrix $\Delta_N$ with 
\begin{equation}\label{eq:E-op-norm}
\P(\| \Delta_N\| \ge N^{-\gamma_0}) =o(1),
\end{equation}
for some $\gamma_0 >0$, so that $L_N^{T+\Delta}$
%the empirical 
%distribution of the eigenvalues of $T_N+\Delta_N$ 
converges weakly, in probability, to  $\mu_{\bm a}$. 
Equivalently, for Lebesgue almost every $z\in\mathbb{C}$,
${\mathcal L}_{L_N^{T+\Delta}}(z)\to {\mathcal L}_{\mu_{\bm a}}(z)$, in probability.
\end{theorem}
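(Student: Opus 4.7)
My approach is to construct $\Delta_N$ as a small, explicitly-structured perturbation for which $T_N + \Delta_N$ is diagonally similar (up to a controlled error) to a matrix whose spectrum is essentially that of a circulant. Recall that the genuine circulant $C_N$ built from the symbol ${\bm a}$ by periodic wrap-around has eigenvalues $\{{\bm a}(e^{2\pi i k/N})\}_{k=0}^{N-1}$, whose empirical measure converges to $\mu_{\bm a}$ by a Riemann-sum argument. The difference $C_N - T_N$ is supported only on two $d \times d$ corner blocks (with $d := \max(d_1,d_2)$), but has spectral norm $\Theta(1)$, so one cannot simply take $\Delta_N := C_N - T_N$ and hope for \eqref{eq:E-op-norm}. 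Instead, I would scale these corner corrections by a small factor and twist the interior so that the resulting modification of $T_N$ is similar to a circulant with a slightly perturbed symbol.

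\textbf{Construction and spectral identification.} Write ${\bm a}(\lambda) = \sum_{k=-d_2}^{d_1} a_k \lambda^k$, set $\epsilon_N := N^{-\gamma_0}$, and consider a diagonal conjugation $D_N = \operatorname{diag}(r_N^i)_{i=1}^N$ with $r_N = 1 + O(\epsilon_N)$. Let $\tilde C_N$ be the circulant with modified symbol $\tilde{\bm a}_N(\lambda) = {\bm a}(\lambda/r_N)$, whose spectrum equals $\{\tilde{\bm a}_N(e^{2\pi i k/N})\}_{k=0}^{N-1}$ and whose ESD still converges to $\mu_{\bm a}$ as $r_N \to 1$. Define $\Delta_N := D_N \tilde C_N D_N^{-1} - T_N$; by construction, $T_N + \Delta_N$ has exactly the spectrum of $\tilde C_N$, so its ESD converges to $\mu_{\bm a}$. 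The interior entries of $\Delta_N$ (those with $|j-i| \le d$) are of size $|r_N^{i-j}-1| \cdot |a_{j-i}| = O(\epsilon_N)$, while the corner entries involve the factors $r_N^{\pm N}$: one corner shrinks polynomially, but the other is inflated. To suppress the inflated corner one may either apply a second, oppositely-scaled conjugation from the other side, or replace $\tilde C_N$ by a two-parameter ``skew-circulant'' whose two corner blocks are independently scaled. In either case, a careful choice of parameters gives $\|\Delta_N\| = O(N^{-\gamma_0'})$ for some $\gamma_0' > 0$, as required by \eqref{eq:E-op-norm}. A small random sign or Gaussian modification in the corners can be added for genericity but is not essential here.

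\textbf{Log-potentials and main obstacle.} Weak convergence of $L_N^{T+\Delta}$ to $\mu_{\bm a}$, together with the uniform bound $\|T_N+\Delta_N\| = O(1)$ (which gives uniform compact support), yields $\mathcal{L}_{L_N^{T+\Delta}}(z) \to \mathcal{L}_{\mu_{\bm a}}(z)$ for Lebesgue-a.e. $z \in \C$ via uniform integrability of $x \mapsto \log|z-x|$ on a fixed disk (the only exceptional set is the image curve ${\bm a}(\mathbb{S}^1)$, which has Lebesgue measure zero). The main obstacle is genuinely the non-triangular case $d_1, d_2 > 0$: a single diagonal twist cannot polynomially shrink both corner blocks at once, so one must either design a more elaborate two-sided similarity or analyze $\det(zI - T_N - \Delta_N)$ directly via a Schur complement on the corner blocks, showing that the interaction between the upper-right and lower-left corner perturbations produces only lower-order shifts in the eigenvalues. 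This non-triangular interaction is precisely the step that extends \cite{BPZ}, which handled only the triangular case in which one corner suffices.
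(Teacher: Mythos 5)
The proposal has a genuine gap at its core, and the paper does not use this approach. You correctly identify the obstacle — a single diagonal conjugation $D_N(\cdot)D_N^{-1}$ with $D_N=\operatorname{diag}(r^i)$ multiplies entry $(i,j)$ by $r^{i-j}$, so it can polynomially shrink the bottom-left corner (where $i-j\approx N$) or the top-right corner (where $i-j\approx -N$) but never both — but none of the fixes you propose actually works, and the rest of the argument does not establish the theorem.

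\textbf{Why the fixes fail.} (i) A ``second, oppositely-scaled conjugation from the other side'' is not available: a similarity must have the form $S M S^{-1}$; applying $D_1 M D_2$ with $D_2\ne D_1^{-1}$ does not preserve the spectrum, and composing two genuine diagonal conjugations $D' (D M D^{-1}) D'^{-1}$ is again a single conjugation by $D'D$, which still scales entry $(i,j)$ by a single geometric factor $(rr')^{i-j}$ — same problem. (ii) The ``two-parameter skew-circulant'': if you set $\widetilde C := \sum_{k\ge 0} a_k P_{f}^k + \sum_{k>0} a_{-k} P_{g}^{-k}$ with two independent corner parameters $f,g$, then both corners of $\widetilde C - T_N$ are indeed small, but $P_f$ and $P_g$ do not commute when $f\ne g$ (a direct computation gives $P_f P_g^{-1}=\operatorname{diag}(1,\dots,1,f/g)\ne P_g^{-1}P_f$), so $\widetilde C$ is no longer a polynomial in a single operator and you lose the eigenvalue formula that your whole construction rests on. The only case where the spectrum is known is $f=g$, i.e.\ the $f$-circulant, for which the two corners scale as $f$ and $1/f$ and cannot both be made small. (iii) The ``Schur complement on the corner blocks'' is asserted, not argued; the interaction of the two corners is precisely where the whole difficulty of the non-triangular case lives, and the paper's entire Section 4 is devoted to controlling it. In fact the paper explicitly remarks (Section 1.1) that ``it is not clear whether there exists at all some deterministic perturbation'' that does the job; your approach is essentially trying to produce one, and the authors did not find such a construction.

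\textbf{What the paper actually does, by contrast.} The paper keeps $\Delta_N$ supported on the same corner positions that you would need, but takes the corner entries to be $N^{-\gamma_\star}$ times bounded i.i.d.\ random variables with bounded densities. It then expands $\det\bigl(T_N(z)+\Delta_N\bigr)$ via the determinantal decomposition of Lemma \ref{lem:cauchy-binet} into homogeneous pieces $P_k(z)$ of degree $k$ in $\Delta_N$, expresses the relevant minors of $T_N(z)$ through a bidiagonal factorization of a slightly larger upper-triangular Toeplitz matrix (Definition \ref{dfn:toep-shifted}, Lemma \ref{lem:bidiagonal-det-1}), and shows that in each region $\mathcal S_{\mathfrak d}$ a single term $P_{|\mathfrak d|}(z)$ dominates (Lemmas \ref{lem:rouche-multi-gr-d_0}, \ref{lem:rouche-multi-le-d_0}, \ref{lem:dom-term-ubd}). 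The lower bound on the dominant term comes from an anti-concentration estimate for multilinear polynomials of independent random variables (Proposition \ref{prop:anti-conc}, used in Lemma \ref{lem:dom-term}) — this is exactly the step where the randomness of $\Delta_N$ is essential, and it is what your proposal does not replace. In short: your proposal diagnoses the right obstacle but does not overcome it; the paper overcomes it by abandoning the explicit-similarity idea in favor of random corners plus a determinant expansion and an anti-concentration argument.

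Finally, a smaller point: the step from weak convergence of $L_N^{T+\Delta}$ plus a uniform operator-norm bound to convergence of $\mathcal L_{L_N^{T+\Delta}}(z)$ is \emph{not} automatic — weak convergence of an ESD controls $\int \log|z-x|$ only away from small eigenvalues, and eigenvalues can cluster near $z$. Your argument would work for the exactly-circulant spectrum (eigenvalues on the curve ${\bm a}(\mathbb S^1)$, bounded away from $z\notin {\bm a}(\mathbb S^1)$), but the general passage ``weak convergence $+$ boundedness $\Rightarrow$ log-potential convergence'' is false, and this is why the paper works directly with the determinant and dominant-term analysis rather than inferring log-potential convergence from weak convergence.
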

\begin{remark}
  \label{rem-twisted}
  We do not know the analogue of Theorem \ref{thm:existence-of-Delta}
  for the \textit{twisted} Toeplitz matrices considered in \cite{BPZ}, and their
  non-triangular generalizations. For this reason, we cannot extend 
  Theorem \ref{thm:main} to the general banded
  twisted case. See however Remark \ref{rem-ell}
  below for the case of \textit{upper triangular}
  twisted Toeplitz matrices.
\end{remark}
We next state the replacement principle alluded to above.
Here and in the sequel, 
$B_\C(c,R)$ denotes the open ball in the complex plane of center $c$ and radius $R$.

\begin{theorem}[Replacement principle]\label{thm:GWZ}
Let $A_N$ be any deterministic matrix with a bounded operator norm. Suppose $\Delta_N$ and $E_N$ are random matrices. Let $\mu$ be a 
probability measure on $\mathbb C$ whose support is contained in
%that is compactly supported on the complex plane. That is, the support of $\mu$ is contained in 
$B_\C(0,R_0/2)$ for some $R_0< \infty$.
Assume the following.
\begin{enumerate}

\item[(a)] $E_N$ and $\Delta_N$ are independent. $\Delta_N$ satisfies \eqref{eq:E-op-norm} and $E_N$ satisfies Assumption \ref{ass:noise-matrix}.  
%\item[(b)] For Lebesgue a.e.~$z \in B_\C(0,R_0/2)$,
%\begin{equation}\label{eq:regularity}
%\lim_{\vep \downarrow 0} \int \log |x-z| {\bf 1}_{\{|x-z| \le \vep\}} d\mu(x) =0.
%\end{equation}
\item[(b)] For Lebesgue a.e.~$z \in B_\C(0,R_0)$, the empirical distribution of the singular values of $A_N -z \Id_N$ converges weakly, to the law induced by $|X- z|$, where $X \sim \mu$.
\item[(c)] 
  For Lebesgue a.e.~every $z \in B_\C(0,R_0)$, 
  %Further assume that
\begin{equation}\label{eq:log-pot-conv-1}
\mathcal{L}_{L_N^{A+\Delta}}(z) \to \mathcal{L}_\mu(z), \qquad \text{ as } N \to \infty, \text{ in probability}.
\end{equation}
\end{enumerate}
Then, for any $\gamma >\frac12$, for Lebesgue a.e.~every $z \in B_\C(0,R_0)$,
\begin{equation}\label{eq:conv-log-pot}
\mathcal{L}_{L_N^{A+N^{-\gamma}E}}(z) \to \mathcal{L}_\mu(z), \qquad \text{ as } N \to \infty, \text{ in probability}.
\end{equation}
\end{theorem}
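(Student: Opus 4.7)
The plan is to show that $\mathcal{L}_{L_N^{A+N^{-\gamma}E}}(z) - \mathcal{L}_{L_N^{A+\Delta}}(z) \to 0$ in probability for Lebesgue a.e.~$z \in B_\C(0,R_0)$, which combined with hypothesis (c) yields \eqref{eq:conv-log-pot}. Using the identity $\mathcal{L}_{L_N^M}(z) = \frac{1}{N}\log|\det(M-z\Id_N)| = \frac{1}{N}\sum_{i=1}^N \log s_i(M-z\Id_N)$, the task reduces to comparing $\frac{1}{N}\sum_i \log s_i^E$ with $\frac{1}{N}\sum_i \log s_i^\Delta$, where $s_i^E$ and $s_i^\Delta$ denote the singular values of $A_N + N^{-\gamma}E_N - z\Id_N$ and $A_N + \Delta_N - z\Id_N$, respectively.

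First I extract a Hilbert--Schmidt perturbation estimate. From \eqref{eq:E-op-norm} and Markov applied to Assumption \ref{ass:noise-matrix}(i),
\[
\|N^{-\gamma}E_N - \Delta_N\|_{\mathrm{HS}}^2 \le 2N^{-2\gamma}\|E_N\|_{\mathrm{HS}}^2 + 2N\|\Delta_N\|^2 = O_{\mathbb{P}}(N^{2-2\gamma} + N^{1-2\gamma_0}) = o_{\mathbb{P}}(N),
\]
since $\gamma > \tfrac12$ and $\gamma_0 > 0$. A Hoffman--Wielandt type inequality for singular values then gives $\sum_i (s_i^E - s_i^\Delta)^2 = o_{\mathbb{P}}(N)$, and Cauchy--Schwarz promotes this to the Wasserstein-$1$ bound $\frac{1}{N}\sum_i |s_i^E - s_i^\Delta| = o_{\mathbb{P}}(1)$. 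I then split the integral using cutoffs $0 < \eta < 1 < R$. On singular values lying in $[\eta, R]$, $\log$ is $\eta^{-1}$-Lipschitz and the Wasserstein-$1$ bound produces an $o_{\mathbb{P}}(1)$ difference for each fixed $\eta$. For singular values exceeding $R$, the elementary bound $\log x \le x \le x^2/R$ for $x \ge R \ge 1$, combined with $\frac{1}{N}\|A_N + N^{-\gamma}E_N - z\Id_N\|_{\mathrm{HS}}^2 = O_{\mathbb{P}}(1)$, yields an $O_{\mathbb{P}}(1/R)$ contribution which vanishes as $R \to \infty$.

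The main obstacle is the small-singular-value regime. For the $E$ side, I apply Assumption \ref{ass:noise-matrix}(ii) to $E_N$ with the deterministic shift $N^\gamma(A_N - z\Id_N)$ (of polynomial operator norm since $z \in B_\C(0,R_0)$ and $\|A_N\|$ is bounded), obtaining $s_{\min}(A_N + N^{-\gamma}E_N - z\Id_N) \ge N^{-\beta-\gamma}$ with probability $1-o(1)$ for some $\beta > 0$; equivalently, $-\log s_i^E \le (\beta+\gamma)\log N$ uniformly in $i$. Log-uniform-integrability on the $\Delta$ side is implicit in hypothesis (c): combined with Weyl perturbation from (b) and $\|\Delta_N\|\to 0$, it forces $\frac{1}{N}\sum_i \log s_i^\Delta \one\{s_i^\Delta < e^{-\tau}\} \to \int_0^{e^{-\tau}}\log s\, d\nu_z(s)$ in probability, with the right-hand side $\to 0$ as $\tau \to \infty$. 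To bridge the two sides, introduce the truncation $f_\tau(x) := (\log x) \vee (-\tau)$, which is $e^\tau$-Lipschitz; the Wasserstein-$1$ bound then gives $\frac{1}{N}\sum_i f_\tau(s_i^E) - \frac{1}{N}\sum_i f_\tau(s_i^\Delta) = o_{\mathbb{P}}(1)$ for each fixed $\tau$. The residual $\frac{1}{N}\sum_i(\log - f_\tau)(s_i^E)$ is controlled via $(\beta+\gamma)\log N$ times the count $\#\{i:s_i^E < e^{-\tau}\}/N$, which is forced to be small by transferring the corresponding count control on $\nu_N^\Delta := \frac{1}{N}\sum_i \delta_{s_i^\Delta}$ across the Wasserstein-$1$ coupling.

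The technical heart of the argument is this last transfer. Assumption \ref{ass:noise-matrix}(ii) controls only the \emph{smallest} singular value of $A_N + N^{-\gamma}E_N - z\Id_N$, not the number of small ones; the count information must be pulled back from the $\Delta$ side via the Wasserstein-$1$ coupling, and $\tau=\tau_N$ must be chosen to grow slowly enough that $e^{\tau_N}$ does not overwhelm the $o_{\mathbb{P}}(1)$ Wasserstein gain, yet fast enough that the uniform-integrability residuals on both sides vanish. Arranging the cutoffs $\eta, R, \tau$ so that all error terms close simultaneously is the most delicate part of the proof.
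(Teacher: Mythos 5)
Your plan is structurally sound and shares the same skeleton as the paper's proof: a Hilbert--Schmidt bound on the perturbation, Hoffman--Wielandt to compare singular-value profiles, a truncation of $\log$, Assumption~\ref{ass:noise-matrix}(ii) for a polynomial lower bound on $s_{\min}$, and a transfer of small-ball mass from the $\Delta$-side (where hypothesis~(c) lives) to the $E$-side. Where you genuinely depart from the paper is in the transfer mechanism: you use a Wasserstein-$1$ coupling of singular values to bound $\nu_N^{E,z}([0,e^{-\tau}])$ by $\nu_N^{\Delta,z}([0,2e^{-\tau}]) + e^{\tau}W_N$ (with $W_N = O_{\mathbb P}(N^{1/2-\gamma}+N^{-\gamma_0})$), whereas the paper compares the two measures via their symmetrized Stieltjes transforms, obtaining the uniform interval estimate $\nu_{B_N}^z([a,b]) \le \nu_{A_N+\Delta_N}^z([a-2\varrho,b+2\varrho]) + O(N^{-\delta'/8})$ with $\varrho=N^{-\delta'/8}$. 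Both transfers yield polynomially small errors once $e^{-\tau}$ is polynomially small, so this part of your argument is a legitimate, arguably more elementary, alternative.

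However, there is a genuine gap in the way you close the small-singular-value estimate, and you in fact flag it as ``the most delicate part'' without resolving it. Your bound on the $E$-side residual is $(\beta+\gamma)\log N \cdot \bigl(\nu_N^{\Delta,z}([0,2e^{-\tau_N}]) + e^{\tau_N}W_N\bigr)$. The second term is fine for $\tau_N = \kappa'\log N$ with $\kappa'$ small, but the first term is $(\beta+\gamma)\log N \cdot \nu_N^{\Delta,z}([0,2N^{-\kappa'}])$, and it is \emph{not} controlled by weak convergence of $\nu_N^{\Delta,z}$ to $\mu_z$: weak convergence gives no rate on a shrinking interval, and even the limit $\mu_z([0,\vep])$ need not be $o(1/\log(1/\vep))$ at any prescribed rate. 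The missing step is a Markov (integration-by-parts) bound that converts the mass of the shrinking interval into the very log-integral that hypothesis~(c) controls, thereby cancelling the $\log N$:
\begin{equation*}
\log N\cdot\nu_N^{\Delta,z}\bigl([0,2N^{-\kappa'}]\bigr)
\;\le\;\frac{\log N}{\kappa'\log N-\log 2}\int_0^{2N^{-\kappa'}}\log\tfrac{1}{x}\,d\nu_N^{\Delta,z}(x)
\;\lesssim_{\kappa'}\int_0^{\vep}\log\tfrac{1}{x}\,d\nu_N^{\Delta,z}(x),
\end{equation*}
for $N$ large, and the right-hand side is made small in probability by first letting $N\to\infty$ (using (b), (c), and Lemma~\ref{lem-(b)}) and then $\vep\downarrow 0$. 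This is precisely what the paper does in \eqref{eq:log-near-0-1} (where the $\log N$ factor and the $1/(\kappa\log N)$ from $\nu([0,2N^{-\kappa}])\le(\kappa\log N)^{-1}\int_0^{2N^{-\kappa}}\log(1/x)d\nu$ cancel). Without this conversion from ``count'' to ``log-integral,'' the term $\log N\cdot\nu_N^{\Delta,z}([0,2e^{-\tau_N}])$ cannot be dispatched, and your error terms do not close. Incorporating this one inequality makes your argument complete and, given the choice $\tau_N=\kappa'\log N$ with $0<\kappa'<\min(\gamma-\tfrac12,\gamma_0)$, the remaining cutoffs $\eta$ and $R$ cause no further trouble.

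One more minor remark: the intermediate-range ($[\eta,R]$) and large-range ($>R$) estimates in your proposal are not strictly necessary as separate cases. Once the truncation $f_\tau$ at level $e^{-\tau_N}=N^{-\kappa'}$ is in place, it is Lipschitz with constant $N^{\kappa'}$, so the Wasserstein-$1$ comparison directly handles $[N^{-\kappa'},R]$; the $>R$ tail can be handled once by the Hilbert--Schmidt bound on both sides (this is essentially the compact-set argument in the paper). This is a stylistic simplification, not a correctness issue.
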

Theorem \ref{thm:GWZ} is a generalization of the replacement lemma in 
\cite[Theorem 5]{GWZ}, with the advantage that it allows for more general noise
models $E_N$ and that it
is stated directly in terms of logarithmic potentials and avoids the need to realize the $*$-limit
of $A_N$ as a regular element of a non-commutative probability space.
It may be of independent interest beyond the study of perturbations of
Toeplitz matrices.
\begin{remark}
  \label{rem-ell}
  Theorem \ref{thm:GWZ} shows that \cite[Theorem 4.1]{BPZ} remains true 
  if one replaces there the complex Gaussian noise $G_N$ by a noise
  $E_N$ satisfying Assumption \ref{ass:noise-matrix}. This can be seen by 
  using in Theorem \ref{thm:GWZ}
  $\Delta_N=N^{-\gamma}G_N$, and using \cite[Lemma 4.6]{BPZ} 
  to verify condition (b) of the theorem.
\end{remark}

\subsection{Related results and extensions}\label{sec:discussion}
The study of the limiting \abbr{ESD} of random perturbations of Toeplitz matrices can be traced back to \cite{DH} where in the simplest case of ${\bm a}(\lambda)=\lambda$, i.e.~when the Toeplitz matrix is the standard Jordan matrix, they derive the limit by studying a relevant {\em Grushin problem}. On the other hand \cite{GWZ} derives the limit in the same set-up by first analyzing the limit of the log-potential of the \abbr{ESD} of a specific (deterministic) perturbation of the Jordan matrix. Then they use an argument similar in spirit to that of Theorem \ref{thm:GWZ} which allows them to replace that specific perturbation by a polynomially vanishing Gaussian perturbation. When the Toeplitz matrix is non-triangular with an  arbitrary symbol it is not straightforward to find the required perturbation. Furthermore, it is not clear whether there exists at all some deterministic perturbation allowing one
to apply \cite[Theorem 5]{GWZ}. Theorem \ref{thm:existence-of-Delta} of this paper shows that one can indeed find a {\em random} perturbation which does that job. Moreover, instead of appealing to \cite[Theorem 5]{GWZ} we use Theorem \ref{thm:GWZ} which enables us to consider a broad class of random perturbations.  

Recently in \cite{BPZ} the limiting spectral distribution of Gaussian perturbation of triangular Toeplitz matrices has been derived by adopting a different strategy. The key to the proof in \cite{BPZ} lies in the following observation: If for Lebesgue a.e.~$z \in \C$ the number of polynomially small singular values of $M_N - z \Id_N$ is not too large, where $\{M_N\}_{N \in \N}$ is some sequence of matrices and $\Id_N$ is the identity matrix, then the limiting \abbr{ESD} of Gaussian perturbations of $M_N$ can be described by the {\em Brown measure} associated with the limiting operator. So it boils down to finding estimates on the number of small singular values. When $M_N=T_N$, a triangular Toeplitz (or a twisted Toeplitz) matrix, this task has been accomplished in \cite{BPZ}. If $T_N$ is a non-triangular matrix then the approach to finding bounds on the number of small singular values that is used in \cite{BPZ} fail.

Let us add that recent works of Sj\"{o}strand and Vogel \cite{SV, SV1} also deal with the limiting spectrum of Gaussian perturbations of general Toeplitz matrices. They use yet another strategy which is similar in spirit to the one adopted in \cite{DH}. In particular, their methods are robust enough that in \cite{SV1} they apply them to Toeplitz operators with unbounded symbols.

There are several possible extensions of this paper that one can pursue. For example, one may be interested in understanding finer details of the spectrum, such as the behavior of the outliers of random perturbations of Toeplitz matrices. Building on the ideas of this paper the behavior of the outliers has been studied in a follow-up work \cite{BZ}. 

Another interesting question would be to study  the limiting \abbr{ESD} of random perturbations of Toeplitz matrices with infinite symbols; as mentioned above, for certain
perturbations this was achieved in \cite{SV1}. A careful inspection of the proof of Theorem \ref{thm:main} of this paper reveals that one can build on the strategies developed in this paper to consider the case of Toeplitz matrices with a slowly growing bandwidth. For ease of writing and explanation we chose to work with a fixed bandwidth. The case of Toeplitz matrices
 with a general infinite symbol is at present beyond the scope of our methods.
% one will need new ideas. 

\subsection*{Outline of the rest of the paper} We will show in Section \ref{sec:proof-thm-main} that Theorem \ref{thm:main} is an immediate consequence of Theorems \ref{thm:existence-of-Delta} and \ref{thm:GWZ}. In Section \ref{sec:proof-outline} we provide the outlines of the proofs of Theorems \ref{thm:existence-of-Delta} and \ref{thm:GWZ}. The proofs of these two theorems are carried out in Sections \ref{sec:proof-existence-of-Delta} and \ref{sec:proof-GWZ}, respectively. Appendix \ref{sec:alg-results} contains some algebraic results are that are used in the proofs. 

\subsection*{Acknowledgements} AB is partially supported by a Start-up Research Grant (SRG/2019/001376) from Science and Engineering Research Board of Govt.~of India, and ICTS--Infosys Excellence Grant. OZ is partially supported by Israel Science Foundation grant 147/15 and funding from the European Research Council (ERC) under the European Union’s Horizon 2020 research and innovation program (grant agreement number 692452). We thank the anonymous referees for helpful comments that enhanced the presentation of this paper.

\section{Outlines of proofs of Theorems \ref{thm:existence-of-Delta} and \ref{thm:GWZ}}\label{sec:proof-outline}

We begin with
an outline of the proof of Theorem \ref{thm:existence-of-Delta}. 
From \cite[Theorem 2.8.3]{tao2012topics} and the fact that the support of 
$\mu_{\bm a}$ is compact, 
it 
suffices to show that for Lebesgue a.e.~$z$ in some large compact subset of the complex plane, $\mathcal{L}_{L_N^{T+\Delta}}(z) 
\to \mathcal{L}_{\mu_{{\bm a}}}(z)$ in probability. Toward this goal, it is useful first to obtain a different representation of the limit. 

%, where $\mu_{\bm a}$ is the law of ${\bm a}(U)$ with ${\bm a}$ and $U$ as in Theorem \ref{thm:existence-of-Delta}. 
%To derive the above convergence we first need to identify the limit. This is done in the following lemma.

\begin{lemma}\label{lem:limit-log-potential}
Let 
\[
{\bm a}(\lambda)= \sum_{k=-d_2}^{d_1} a_k \lambda^k,
\]
for some $d_1, d_2 \in \N$. For any $z \in \C$ let $\lambda_1(z), \lambda_2(z),\ldots, \lambda_d(z)$ be the roots of the polynomial equation 
\begin{equation}\label{eq:eq-symbol}
P_{z,{\bm a}}(\lambda):=({\bm a}(\lambda)-z)\cdot \lambda^{d_2}=0,
\end{equation}
where $d:=d_1+d_2$. Then, for any $z \in \C$,
\[
\mathcal{L}_{\mu_{\bm a}}(z)= \log |a_{d_1}|+ \sum_{k=1}^d \log_+ |\lambda_k(z)|,
\]
where for $x \ge 0$, $\log_+(x) := \max\{\log x, 0\}$.
\end{lemma}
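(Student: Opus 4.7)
The strategy is to convert the log-potential into a contour integral, use the polynomial factorization of $P_{z,\bm a}$, and then invoke the classical Jensen-type identity for the uniform measure on the unit circle. Since $U$ is uniform on $\mathbb{S}^1$, by definition of pushforward,
\[
\mathcal{L}_{\mu_{\bm a}}(z) \;=\; \int \log|z-x|\,d\mu_{\bm a}(x) \;=\; \int_0^{2\pi} \log\bigl|z-{\bm a}(e^{i\theta})\bigr|\,\frac{d\theta}{2\pi}.
\]
This rewriting is the starting point, and the remaining work is a computation.

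\textbf{Key steps.} First, observe that (assuming $a_{d_1}\ne 0$, which is built into the definition of the symbol's degree)
\[
P_{z,{\bm a}}(\lambda) \;=\; ({\bm a}(\lambda)-z)\lambda^{d_2} \;=\; a_{d_1}\lambda^{d_1+d_2}+\cdots+a_{-d_2}
\]
is a polynomial of degree $d=d_1+d_2$ in $\lambda$ with leading coefficient $a_{d_1}$, so it factors as
\[
P_{z,{\bm a}}(\lambda)\;=\;a_{d_1}\prod_{k=1}^{d}\bigl(\lambda-\lambda_k(z)\bigr).
\]
Evaluating at $\lambda=e^{i\theta}$ and taking absolute values (using $|e^{i d_2\theta}|=1$) gives
\[
\bigl|{\bm a}(e^{i\theta})-z\bigr|\;=\;|a_{d_1}|\prod_{k=1}^{d}\bigl|e^{i\theta}-\lambda_k(z)\bigr|.
\]
Taking logarithms and integrating over $\theta\in[0,2\pi]$ against $d\theta/(2\pi)$ yields
\[
\mathcal{L}_{\mu_{\bm a}}(z)\;=\;\log|a_{d_1}|\;+\;\sum_{k=1}^{d}\int_0^{2\pi}\log\bigl|e^{i\theta}-\lambda_k(z)\bigr|\,\frac{d\theta}{2\pi}.
\]

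\textbf{Final step.} It remains to apply the classical identity
\[
\int_0^{2\pi}\log\bigl|e^{i\theta}-w\bigr|\,\frac{d\theta}{2\pi}\;=\;\log_+|w|,\qquad w\in\C,
\]
which is an immediate consequence of Jensen's formula applied to the function $\zeta\mapsto \zeta-w$ (equivalently, it is the well-known fact that the logarithmic potential of the uniform probability measure on $\mathbb{S}^1$ equals $\log_+|\cdot|$). Substituting $w=\lambda_k(z)$ for each $k$ and summing gives the claimed formula.

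\textbf{Obstacles.} The proof is essentially a one-line calculation once the ingredients are assembled; the only minor subtlety is making sure $P_{z,{\bm a}}$ really has degree $d$, which is guaranteed by $a_{d_1}\ne 0$ (this is part of the convention that the symbol is a Laurent polynomial with extremal nonzero coefficients $a_{d_1}$ and $a_{-d_2}$). No uniform integrability or exchange-of-limit issues arise, since for fixed $z$ the integrand $\log|e^{i\theta}-\lambda_k(z)|$ has at most a logarithmic singularity and is integrable on $[0,2\pi]$.
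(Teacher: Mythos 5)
Your proof is correct and is precisely the ``straightforward modification'' of \cite[Lemma 4.3]{BPZ} that the paper alludes to without spelling it out: rewrite $\mathcal{L}_{\mu_{\bm a}}(z)$ as $\int_0^{2\pi}\log|{\bm a}(e^{i\theta})-z|\,d\theta/(2\pi)$, factor $P_{z,{\bm a}}(\lambda)=a_{d_1}\prod_k(\lambda-\lambda_k(z))$, use $|e^{id_2\theta}|=1$ to drop the $\lambda^{d_2}$ on the circle (this is exactly the step that generalizes the triangular case $d_2=0$), and apply the classical identity $\int_0^{2\pi}\log|e^{i\theta}-w|\,d\theta/(2\pi)=\log_+|w|$. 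No gaps; the argument matches the referenced source's approach.
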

The proof of Lemma \ref{lem:limit-log-potential} is
a straightforward modification of 
that of \cite[Lemma 4.3]{BPZ}. We omit the details.

We next sketch the proof of Theorem \ref{thm:existence-of-Delta}, 
in the special case where 
$T_N$ is the Toeplitz matrix with symbol ${\bm a}(\lambda)=\lambda+\lambda^2$. 
%For brevity, for any $z \in \C$, let us denote 
Set $T_N(z):=T_N - z \Id_N$ where $z\in\C$. By Lemma 
\ref{lem:limit-log-potential}, the form of limiting log potential
depends on the number of roots of the polynomial $P_{z,{\bm a}}(\lambda)$ 
greater than one in modulus. This yields
(open) regions $\mathcal{R}_\ell \subset \C$, $\ell=0,1,2$, whose
boundaries have zero Lebesgue measure and 
the closure of whose union is  $\C$, so that
for all $z \in {\mathcal R}_\ell$ there are exactly $\ell$ roots of the equation $\lambda+\lambda^2 -z =0$ that are greater than one in modulus. Thus, to establish Theorem \ref{thm:existence-of-Delta} we need to find a noise matrix $\Delta_N$ such that the following holds:
\begin{equation}\label{eq:limit-heuristic}
\lim_{N \to \infty} \mathcal{L}_{L_N^{T+\Delta}}(z) = \lim_{N \to \infty} \frac{1}{N}\log |\det(T_N(z) + \Delta_N)| = \left\{\begin{array}{ll} \log|\lambda_1(z)| + \log|\lambda_2(z)| & \mbox{ if } z \in \mathcal{R}_2\\
\log|\lambda_1(z)| & \mbox{ if } z \in \mathcal{R}_1\\
0 & \mbox{ if } z \in \mathcal{R}_0
\end{array}
\right.,
\end{equation}
where $\lambda_1(z)$ and $\lambda_2(z)$ are the roots of the relevant equation arranged in the non-increasing order of their moduli. We refer the reader to Figure \ref{fig:limacon-shade} for an illustration of the regions $\mathcal{R}_\ell,\, \ell=0,1,2$.
\begin{figure}
  \includegraphics[height=2.5in, width=2.5in]{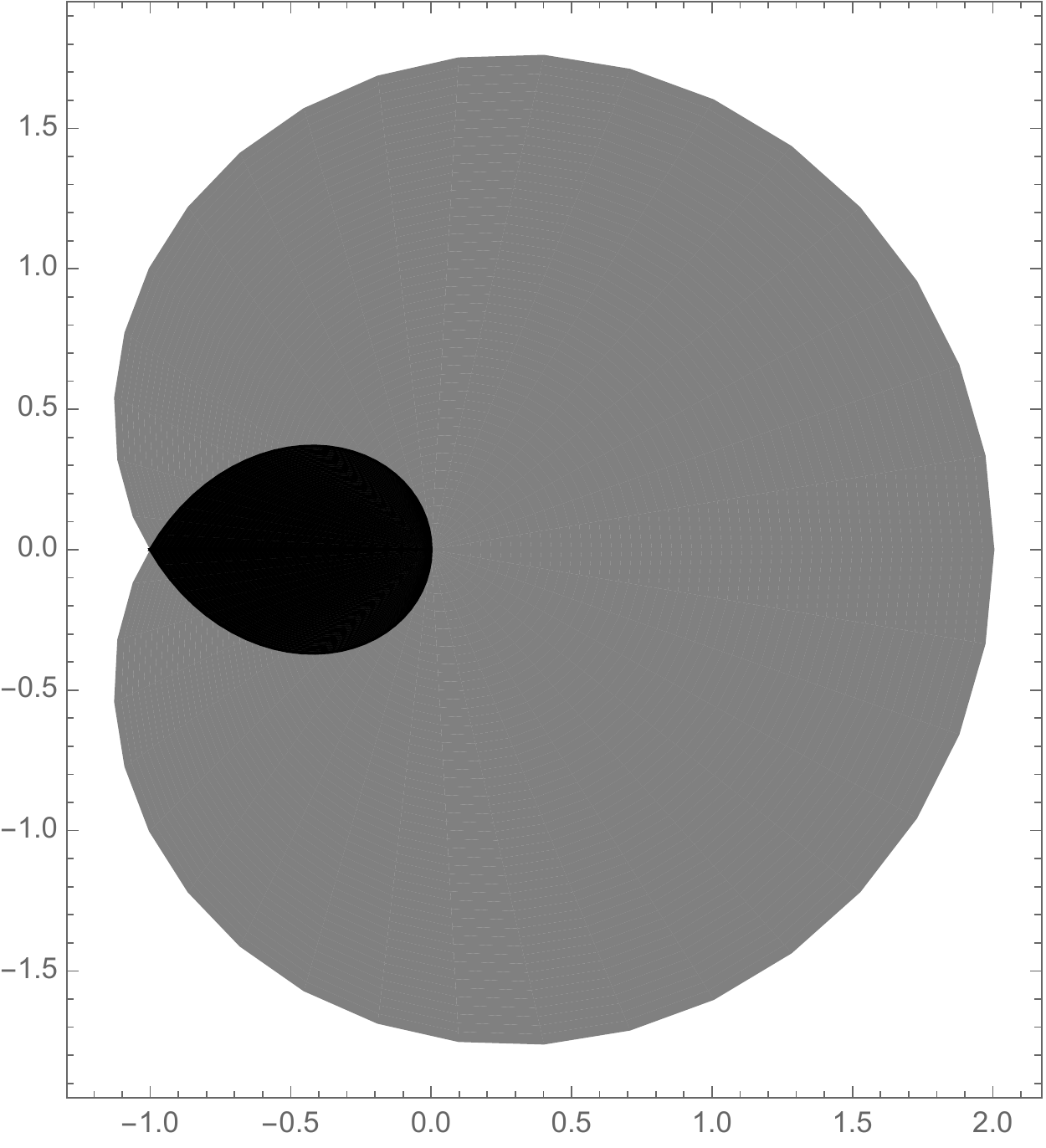}
  \caption{The open regions $\mathcal{R}_i$ for
  the polynomial $\lambda \mapsto {\bm a}(\lambda) := \lambda+\lambda^2$,
  with $\mathcal{R}_0$ in black, $\mathcal{R}_1$ in 
  grey and $\mathcal{R}_2$ in white. 
  Inside $\mathcal{R}_\ell$, there are exactly
  $\ell$ roots 
  %the \texttt{black} and \texttt{gray} regions (excluding their boundaries), denoted  by $\mathcal{R}_0$ and $\mathcal{R}_1$, none and one of the roots 
  of the equation ${\bm a}(\lambda)=z$ that are greater than one in moduli.}
  %, respectively. In $\mathcal{R}_2:= \C \setminus \overline{\mathcal{R}_1 \cup \mathcal{R}_0}$ both roots of ${\bm a}(\lambda)=z$ are greater than one in moduli.}
  \label{fig:limacon-shade}
\end{figure}
(We will see later that it is enough to consider the noise $\Delta_N$
supported on the lower left elements $\Delta_N(N,1), 
\Delta_N(N,2), \Delta_N(N-1,1)$.)

To derive \eqref{eq:limit-heuristic},
we expand the determinant of $T_N(z)+\Delta_N$. 
The latter 
can be written as a linear combination of products of 
determinants of various sub-matrices of $T_N(z)$ and $\Delta_N$ 
(see Lemma \ref{lem:cauchy-binet} below). We identify 
%Therefore, it remains to identify 
the dominant term in this expansion, as follows. 
Let $A_N[X;Y]$ denote
the sub-matrix of $A_N$ induced by the rows and the columns indexed 
by $X$ and $Y$, respectively. 
Recalling Widom's theorem concerning
the determinant of a finitely banded Toeplitz matrix (see \cite{SS, widom}), we obtain
%which gives
%to deduce that  
\begin{equation}\label{eq:widom}
\det(T_N(z)[X;Y]) \sim \left\{ \begin{array}{ll}
|\lambda_1(z)|^N \cdot |\lambda_2(z)|^N & \mbox{ if } X= Y=[N]\\ 
|\lambda_1(z)|^N & \mbox{ if } X=[N-1], \, Y = [N]\setminus \{1\}\\
1 & \mbox{ if } X=[N-2], \ Y = [N]\setminus \{1,2\}
\end{array}
\right.,
\end{equation}
where we write $a_N \sim b_N$ to indicate that there exists some absolute constant $C>0$ such that $N^{-C} a_N \le b_N \le N^C a_N$, for all large $N$. 

From \eqref{eq:widom} we see that if $z \in \mathcal{R}_0$ or $\mathcal{R}_1$ 
then there are sub-matrices of $T_N(z)$ whose determinants
are of larger magnitude than that of $T_N(z)$. 
We also note that the expansion of $\det (T_N(z)+\Delta_N)$ has
terms that are products of determinants of these sub-matrices 
and the determinant of relevant sub-matrices of the noise matrix $\Delta_N$
(of fixed dimension), 
where the latter can be chosen to be non-zero 
and only polynomially (in $N$) decaying. It follows that
if the determinants of those sub-matrices of $T_N(z)$
are of maximal exponential growth among the determinants of all possible 
sub-matrices of $T_N(z)$,
then $\frac{1}{N}\log |\det(T_N(z)+\Delta_N)|$ 
converges to the limit in \eqref{eq:limit-heuristic}. 
This not only explains  how the limit arises but also identifies potential 
candidates for the dominant terms 
(depending on the location of $z$ in the complex plane) in the expansion of the determinant, and gives a heuristic for the proof of
Theorem \ref{thm:existence-of-Delta}.

%is motivated by this heuristic. However, as we explain below, obtaining a rigorous proof based on this heuristic involves a few critical steps. 

To justify this heuristic and obtain an actual
proof of Theorem \ref{thm:existence-of-Delta} in the case under consideration,
it is natural to extend \eqref{eq:widom} and claim that  
\begin{equation}\label{eq:widom-1}
\sum_{\ell \ne k} P_\ell(z) =o\left(N^{-C}\cdot \prod_{j=1}^k |\lambda_j(z)|^N\right) =\Omega(|P_k(z)|), \qquad z \in \mathcal{R}_k, \, k =0,1,2,
\end{equation}
for some large absolute constant $C$, with large probability, where $P_\ell(z)$ is the homogeneous polynomial of degree $\ell$ in the entries of $\Delta_N$, in the expansion of the determinant of $T_N (z)+\Delta_N$. \corA{In \eqref{eq:widom-1} we have used the standard notations $a_n=o(b_n)$ and $a_n = \Omega(b_n)$ to denote $\lim_{n \to \infty} a_n/b_n=0$ and $\liminf_{n \to \infty} a_n/b_n >0$, respectively.}
 Finding bounds on $P_\ell(z)$ requires the same for $\det( T_N(z)[X;Y])$ for all subsets $X, Y \subset [N]$ such that $|X|=|Y|=N-\ell$. As $T_N(z)[X,Y]$ is not necessarily a Toeplitz matrix for arbitrary choices of $X, Y \subset [N]$ we can no longer rely on Widom's result. We overcome this obstacle by noting that any upper triangular finitely banded Toeplitz matrix $T_N$ can be represented as a product of bidiagonal matrices, where the bidiagonal matrices depend on the roots of polynomial equation associated with the symbol of the Toeplitz matrix in context. Since the determinant of any sub-matrix of a bidiagonal matrix is easily computable (see Lemma  \ref{lem:bidiagonal-det-1}) one can then use the
Cauchy-Binet theorem to find a bound on $\det (T_N(z)[X;Y])$. 
Using this and some combinatorial arguments, we then obtain
the desired bound on $P_\ell(z)$ whenever the entries of $\Delta_N$ are 
uniformly polynomially vanishing.  

We emphasize that the approach described above generalizes easily to
triangular
finitely banded Toeplitz matrix. The general case requires a modification, 
since
non-triangular Toeplitz matrices cannot be decomposed into a product.
%do not have the product structure. 
We resolve this issue
by using the following simple key observation:~any Toeplitz matrix with 
finite symbol can be viewed as a sub-matrix of an upper triangular 
Toeplitz matrix with an another finite symbol of a slightly larger dimension. 
Using this observation, we can then follow the same scheme as described above to find an upper bound on $P_\ell(z)$.

To complete the proof of \eqref{eq:widom-1} we then need 
to find a lower bound on the predicted dominant term, $P_k(z)$. This
is obtained using
an anti-concentration estimate,
which is shown to hold whenever the entries of $\Delta_N$ 
are assumed to have a bounded density, which we will impose since the matrix
$\Delta_N$ is an auxilliary matrix and does not appear in the statement of
our main theorem, Theorem \ref{thm:main}. See Lemma \ref{lem:dom-term} 
and Proposition \ref{prop:anti-conc}. This will prove \eqref{eq:widom-1}. 
To finish the proof of Theorem \ref{thm:existence-of-Delta}, we then obtain an (easy)
matching upper bound on $P_k(z)$.

We next outline the proof of Theorem \ref{thm:GWZ}. It suffices to show that for Lebesgue a.e.~$z$ in a compact subset of $\C$,
\begin{equation}\label{eq:log-pot-diff}
\lim_{N \to \infty} \left| \mathcal{L}_{L_N^{A+\Delta}}(z) - \mathcal{L}_{L_N^{A+N^{-\gamma} E}}(z)\right| =0, \qquad \text{ in probability}.
\end{equation}
Using the
assumptions of Theorem \ref{thm:GWZ} and standard perturbation results 
for the spectrum of Hermitian matrices, it readily follows that 
$\nu_{A_N+\Delta_N}^z$ and $\nu^z_{A_N+N^{-\gamma} E_N}$, 
the empirical distributions of the singular values of 
$A_N(z)+\Delta_N$ and $A_N(z)+N^{-\gamma} E_N$, respectively, 
have the same limit, and that limit is $\mu_z$, the law of $|X-z|$ where
$X\sim \mu$. As $\log(\cdot)$ is unbounded both near $0$ and 
$\infty$, the limit in \eqref{eq:log-pot-diff} is not immediate from this. 
Using bounds on the Hilbert-Schmidt norms of the relevant matrices 
the singularity near $\infty$ can be taken care of. 
Treating the singularity of $\log(\cdot)$ near $0$ involves two steps. 
As the integral of $\log(\cdot)$ near zero,  with respect to $\mu_z$
%, where $\mu_z$ is the law of $|X-z|$, $X \sim \mu$, and $\mu$ is the limit measure,
is negligible, using assumptions (b)-(c) the same can be shown to hold for 
$\nu^z_{A_N+\Delta_N}$. Hence, it suffices to show that the integral of 
$\log(\cdot)$ on the interval $(0,\vep)$ with respect to 
$\nu^z_{A_N+N^{-\gamma} E_N}$ goes to zero as $\vep \downarrow 0$.

The latter is obtained by standard arguments, as follows.
We use Assumption \ref{ass:noise-matrix}(ii) 
to deduce that it is enough to integrate 
$\log(\cdot)$ in $(N^{-\kappa_\star}, \vep)$ for some small constant 
$\kappa_\star$. Now, using bounds on Hilbert-Schmidt norms of $E_N$ 
and $\Delta_N$ one can derive a bound on the difference of the 
Stieltjes transforms 
of $\nu^z_{A_N+N^{-\gamma} E_N}$ and $\nu^z_{A_N+\Delta_N}$. 
Using this, one obtains
that the difference of the total mass
of any interval near zero, 
under $\nu^z_{A_N+N^{-\gamma} E_N}$ and $\nu^z_{A_N+\Delta_N}$,
is negligible. Upon using an integration by parts, this gives the 
required control on
the integral of $\log(\cdot)$ near $0$ under $\nu^z_{A_N+N^{-\gamma}E_N}$ and
completes the proof.
%. Recalling that the integral of $\log(\cdot)$ near $0$ under $\nu^z_{A_N+\Delta_N}$ is negligible, the outline of the proof completes. 

%As our goal is to study the behavior of the outliers we need to study the roots of the equation \(\det (\cM_N(z))=0\) for $z \in \C$. Note that the determinant is a complicated polynomial in $z$ and the entries of $G_N$ of degree $N$. Hence, to find the interior density of the outliers it will be useful to identify the dominant term in the expansion of the determinant so that upon applying Rouch\'{e}'s theorem it will suffice to analyze the roots of only the dominant term. 
%This necessitates the following notations. For $k \in [N]$ we denote

%\red{Using \cite[Eqn.~(4)]{FPZ} we observe that $P_k(z)$ is the homogeneous polynomial of degree $k$ in the entries of $G_N$ obtained from the expansion of the determinant of $\cM_N(z)$. } 

\section{Proof of Theorem \ref{thm:main} using Theorems \ref{thm:existence-of-Delta} and \ref{thm:GWZ}}\label{sec:proof-thm-main}
We will take $\Delta_N$ provided by Theorem \ref{thm:existence-of-Delta},
set
$\mu=\mu_{\bm a}$ in Theorem \ref{thm:GWZ},
and verify that the hypotheses of the latter hold.
Clearly, $A_N$ has uniformly bounded operator norm.
%To prove Theorem \ref{thm:main} we only need to verify the assumptions of Theorem \ref{thm:GWZ}. 
The assumption (a) is obvious. 
To see that
assumption (b) holds, it is enough  to check that for $k$ positive integer,
\begin{equation}
  \label{eq-moments}
  \lim_{N\to\infty} \frac1N\mbox{\rm tr} ((z\Id_N-T_N)(z\Id_N-T_N)^*)^k-
  \E (|z-\sum_{i=-d_2}^{d_1} a_i U^i)|^{2k})=0.
\end{equation}
To check \eqref{eq-moments} we first note that 
\begin{equation}\label{eq:T_N-decomp}
T_N = \sum_{m=0}^{d_1} a_m J_N^m + \sum_{n=1}^{d_2} a_{-n} (J_N^*)^n,
\end{equation}
where $J_N$ is the nilpotent matrix given by given by $(J_N)_{i,j} = {\bf 1}_{j = i+1}$. Using this observation we then expand ${\rm tr} ((z\Id_N-T_N)(z\Id_N-T_N)^*)^k$ and find out the limit of each term in term in that expansion. To work out this step we need to introduce some notation. 

Let 
$\underline m:=(m_1,\ldots,m_{2k})$ and $\underline n:=(n_1,\ldots,n_{2k})$ 
with $n_i,m_i$ non-negative integers bounded by $\max(d_1,d_2)$, and set $M:=
M_{\underline m,\underline n}:=\sum m_i-\sum n_i$. We say that $(\underline m,\underline n)$ is balanced if $M_{\underline m,\underline n}=0$. Using \eqref{eq:T_N-decomp} we find that 
\[\frac1N\mbox{\rm tr} ((z\Id_N-T_N)(z\Id_N-T_N)^*)^k=
  \frac1N 
  \sum_{\underline m, \underline n} b_{\underline m,\underline n}(z)
 \mbox{\rm tr} \left[J_N^{m_1}(J_N^*)^{n_1}\cdots  J_N^{m_{2k}}(J_N^*)^{n_{2k}}\right]\,\]
for appropriate coefficients $b_{\underline m,\underline n}(z)$, while
\[\E (|z-\sum_{i=-d_2}^{d_1} a_i U^i)|^2)=\sum_{\underline m, \underline n}
b_{\underline m,\underline n}(z)\E [U^{M_{\underline m,\underline n}}],\]
with the same coefficients $b_{\underline m,\underline n}(z)$. Note that
$$\mbox{\rm tr} \left[J_N^{m_1}(J^*_N)^{n_1}\cdots  J^{m_{2k}}_N(J^*_N)^{n_{2k}}\right]=0$$
if $(\underline m,\underline n)$ is not balanced, while
$$\lim_{N\to \infty} \frac1N
\mbox{\rm tr} \left[J_N^{m_1}(J_N^*)^{n_1}\cdots  J_N^{m_{2k}}(J_N^*)^{n_{2k}}\right]=1$$
if $(\underline m,\underline n)$ is  balanced. Similarly,
$\E [U^{M_{\underline m,\underline n}}]$ equals $1$
if $(\underline m,\underline n)$
is  balanced, and vanishes otherwise.
Combining these facts, we obtain \eqref{eq-moments}, and thus verify
that assumption (b) holds.

Assumption (c) holds because, from
%Returning to the proof of Theorem \ref{thm:main}, we have from
%we note that 
Theorem \ref{thm:existence-of-Delta}, we see that
for 
Lebesgue a.e.~$z \in B_\C(0,R_0)$,
\[
\mathcal{L}_{L_N^{T+\Delta}}(z) \to \mathcal{L}_{\mu_{\bm a}}(z), \qquad \text{ in probability}.
\]
%for Lebesgue a.e.~$z \in B_\C(0,R)$ for any fixed large $R < \infty$. 
%Therefore, the assumption (d) of Theorem \ref{thm:GWZ} is 
%also satisfied.

%, and 
%in particular, for almost every $z\in B_\C(0,R_0)$, 
%the empirical measure of singular values of $T_N+\Delta_N-z\Id_N$ converges to the law
%of $|X-z|$ where $X\sim \mu_{\bm a}$, in probability.
%Since the norm of  $\Delta_N$ goes to $0$ as $N\to\infty$ in probability
%while the operator norm of 
%$A_N$ is uniformly bounded, the same convergence (in probability) holds for the
%empirical measure of singular values of $T_N-zI_N$, implying that 
%assumption (c) holds. (See the beginning of the proof of Theorem
%\ref{thm:GWZ} for a similar argument and more details.)

We have checked all assumptions of Theorem \ref{thm:GWZ}; applying the latter
we conclude that
for Lebesgue a.e.~$z \in B_\C(0,R_0)$ and for
any $\gamma > \frac12$,
\[
\mathcal{L}_{L_N^{T+N^{-\gamma} E}}(z) \to \mathcal{L}_{\mu_{\bm a}}(z), \qquad \text{ in probability}.
\]
By the proof of \cite[Theorem 2.8.3]{tao2012topics} and the fact that
the support of $\mu_{\bm a}$ is compact,
this implies the convergence in probability of 
$L_N^{T+N^{-\gamma} E}$ to $\mu_{\bm a}$ in the vague \corA{topology}, 
and hence in the weak topology.
\qed

\section{Proof of Theorem \ref{thm:existence-of-Delta}}\label{sec:proof-existence-of-Delta}
In this section we prove Theorem \ref{thm:existence-of-Delta}. 
As outlined in Section \ref{sec:proof-outline},
the key is to establish \eqref{eq:widom-1}. 
Turning to this task, introduce,
for any $k \in [N]$,
\begin{equation}\label{eq:P_k-z}
P_k(z):= \sum_{\substack{X, Y \subset [N]\\ |X|=|Y|=k}} (-1)^{\sgn(\sigma_{X}) \sgn(\sigma_{Y})} \det (T_N(z)[{X}^c; {Y}^c]) \cdot \det (\Delta_N[X; Y]),
\end{equation}
where ${X}^c:=[N]\setminus X$, ${Y}^c:= [N]\setminus Y$, and for $Z \in \{X, Y\}$ $\sigma_Z$ is the permutation on $[N]$ which places all the elements of $Z$ before all the elements of ${Z}^c$, but preserves the order of the elements within the two sets. 
Define
\[
\mathscr{S}_{d_1,d_2}:= \left\{(i,j) \in [N]\times [N]: i-j \notin \{-(N-\ell); \, \ell=1,2,\ldots,d_1\} \cup\{N-\ell; \, \ell=1,2,\ldots, d_2\}\right\}.
\]
To prove Theorem \ref{thm:existence-of-Delta} we will choose $\Delta_N$ which satisfies the following band structure:
\[
(\Delta_N)_{i,j} \ne 0 \qquad \text{ only if } i-j \in 
\mathscr{S}_{d_1,d_2}^c,
\]
where 
$(\Delta_N)_{i,j}$ denotes the $(i,j)$-th entry of $\Delta_N$. That is,
$\Delta_N$ has non-zero entries only in its
lower left and upper right corners,
and the widths of those corners are determined by $d_1$ and $d_2$, 
respectively. As indicated in \eqref{eq:widom} such a band structure is necessary (as we will see it is also sufficient) to have a non-zero contribution from the sub-matrices of $T_N(z)$ 
whose determinants are of larger magnitudes compared to that of the 
whole matrix, in the expansion of $\det(T_N(z)+\Delta_N)$. 
%For ease of writing, let us denote
%\[
%\mathscr{S}_{d_1,d_2}:= \left\{(i,j) \in [N]\times [N]: i-j \notin \{-(N-\ell); \, \ell=1,2,\ldots,d_1\} \cup\{N-\ell; \, \ell=1,2,\ldots, d_2\}\right\}.
%\]
Recall from \eqref{eq:limit-heuristic}-\eqref{eq:widom} 
that the dominant term depends on the number of roots of 
$P_{z, {\bm a}}(\cdot)$ of \eqref{eq:eq-symbol}, that are greater than one in modulus. Hence, we split the complex plane into regions according
to the number of roots of $P_{z, {\bm a}}(\cdot)$ with modulus greater 
than one, using the following notation.

%Below we will see that the randomness of the outliers of $\cM_N$ at $z \in \C$ depend on its location in the complex plane. To identify these different regions in the complex plane we have the following definitions. 

Let $\{-\lambda_i(z)\}_{i=1}^d$, $d:=d_1+d_2$\footnote{Hereafter $\{-\lambda_\ell(z)\}$ will denote
the roots of the equation $P_{z,{\bm a}}(\lambda)=0$. This change in notation is adopted to avoid the unnecessary appearance of signs in the determinant of the sub-matrices of $J_N-\lambda \Id_N$.}, be the roots of the equation 
$P_{z, {\bm a}}(\cdot)=0$,
arranged so that
$|\lambda_1(z)| \ge |\lambda_2(z)| \ge \cdots \ge |\lambda_d(z)|$.
For $z \in \C$, let $d_0(z)$ denote the number of roots of the equation $P_{z, {\bm a}}(\cdot)=0$ that are greater than or equal to one in moduli. Fixing $R < \infty$, for $-d_2 \le \gd \le d_1$ we define
\[
\cS_{\gd}:= \left\{ z \in B_\C(0,R): d_1 - d_0(z) = \gd \text{ and }  |\lambda_{d_0(z)}(z)| > 1 > |\lambda_{d_0(z)+1}(z)| \right\}.
\]
Note that 
\[
B_\C(0,R) \setminus (\cup_{\ell=-d_2}^{d_1} \cS_{\ell}) \subset \{z \in B_\C(0,R): P_{z, {\bm a}}(\lambda)=0\text{ for some } \lambda \in \mathbb{S}^1\}.
\] 
If $P_{z, {\bm a}}(\lambda)=0$ for some $\lambda \in \mathbb{S}^1$ then we also have that
\[
z = \sum_{\ell=-d_2}^{d_1} a_\ell \lambda^\ell.
\]
Therefore $B_\C(0,R) \setminus (\cup_{\ell=-d_2}^{d_1} \cS_{\ell})$ is contained in a set of Lebesgue measure zero and hence it is enough to consider $z \in \cup_{\ell=-d_2}^{d_1}\cS_\ell$. Further let $\cN$ be the set of $z$'s for which $P_{z}(\cdot)$ admits a double root. It follows from \cite[Lemma 11.4]{bottcher-finite-band} that the cardinality of $\cN$ is at most finite. 

The next lemma
identifies the dominant term in the expansion of $\det(T_N(z)+\Delta_N)$.

 \begin{lemma}\label{lem:dom-term}
Fix $\gd$ such that $ -d_2 \le \gd \le d_1$. Let $\Delta_N$ be such that 
\[
(\Delta_N)_{i,j} = N^{-\gamma_\star} \updelta_{i,j} {\bf 1}_{\{(i,j) \in \mathscr{S}_{d_1,d_2}\}}, \qquad i,j \in [N],
\]
for some $\gamma_\star >d$, where $\{\updelta_{i,j}\}$ are uniformly bounded real valued independent random variables with uniformly bounded densities with respect to the Lebesgue measure. Then, for Lebesgue a.e.~$z \in \cS_\gd$, and any $\vep_0 >0$,
\begin{equation}\label{eq:dom-term}
\lim_{N \to \infty} \P \left( \frac{\left|P_{|\gd|}(z) \right|}{|a_{d_1}|^N \cdot \prod_{i=1}^{d_0(z)} |\lambda_i(z)|^{N}} \le N^{-({\gamma_\star |\gd|+\vep_0})}\right)
 =0,
 \end{equation}
 where an empty product by convention is set to one.
\end{lemma}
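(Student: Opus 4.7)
The plan is to identify a dominant $k\times k$-minor of $\Delta_N$ whose contribution to the expansion \eqref{eq:P_k-z} has the exponential scale $|a_{d_1}|^N\prod_{i=1}^{d_0(z)}|\lambda_i(z)|^N$, bound every other contribution uniformly up to polynomial-in-$N$ factors, and then invoke an anti-concentration argument to rule out accidental cancellation.

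Set $k:=|\gd|$. By the symmetry between the two corners of the support of $\Delta_N$, I treat $\gd\ge 0$; the case $\gd<0$ is analogous after reflecting across the anti-diagonal. Since $\cN$ has zero Lebesgue measure I may restrict to $z\in\cS_\gd\setminus\cN$, so that the roots are simple and $|\lambda_{d_0(z)}(z)|>1>|\lambda_{d_0(z)+1}(z)|$ with a quantitative gap. The first step is to exhibit a distinguished index pair $(X_\star,Y_\star)$ with $|X_\star|=|Y_\star|=k$, placed at the extreme rows and columns of the appropriate corner of $\Delta_N$, so that $\Delta_N[X_\star;Y_\star]$ is a genuine $k\times k$ matrix of independent entries of the form $N^{-\gamma_\star}\updelta_{i,j}$ (with at least the anti-diagonal monomial having a nonzero symbolic contribution), and such that the complementary Toeplitz minor $T_N(z)[X_\star^c;Y_\star^c]$ satisfies the two-sided Widom-type estimate
\[
|\det(T_N(z)[X_\star^c;Y_\star^c])|\asymp |a_{d_1}|^N\prod_{i=1}^{d_0(z)}|\lambda_i(z)|^N
\]
up to polynomial factors in $N$. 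To obtain this I plan to use the bidiagonal-product decomposition of upper triangular finitely banded Toeplitz matrices (Lemma \ref{lem:bidiagonal-det-1}) combined with the Cauchy-Binet formula; for the non-triangular case I embed $T_N$ as a sub-matrix of a larger upper triangular Toeplitz matrix, as flagged in Section \ref{sec:proof-outline}, and deduce the estimate from the triangular case.

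The same bidiagonal-product decomposition plus Cauchy-Binet and combinatorial bookkeeping over subsets of roots of $P_{z,{\bm a}}$ yield the uniform bound
\[
|\det(T_N(z)[X^c;Y^c])|\le N^{C_1}\,|a_{d_1}|^N\prod_{i=1}^{d_0(z)}|\lambda_i(z)|^N,
\]
valid for every $(X,Y)$ with $|X|=|Y|=k$; the dependence on $d_0(z)$ arises by isolating the subset of roots dominating in the region $\cS_\gd$. Since each entry of $\Delta_N$ is $O(N^{-\gamma_\star})$, so that $|\det(\Delta_N[X;Y])|\le k!(CN^{-\gamma_\star})^k$, and the number of terms in \eqref{eq:P_k-z} is at most $\binom{N}{k}^2\le N^{2k}$, I can then factor
\[
P_{|\gd|}(z)=N^{-\gamma_\star k}\,|a_{d_1}|^N\prod_{i=1}^{d_0(z)}|\lambda_i(z)|^N\cdot Q(\{\updelta_{i,j}\}),
\]
where $Q$ is a polynomial of degree $k$ in finitely many of the independent bounded-density random variables $\updelta_{i,j}$, with coefficients of size at most $N^{C_1}$, and in which the coefficient of the distinguished monomial arising from $(X_\star,Y_\star)$ has absolute value at least $N^{-C_2}$ for some constant $C_2$.

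To conclude, I apply Proposition \ref{prop:anti-conc}: since $Q$ is a polynomial of fixed degree $k\le d$ in independent random variables with uniformly bounded densities and at least one of its coefficients is bounded below polynomially in $N$, one obtains $\P(|Q|\le N^{-\vep_0})=o(1)$ for every $\vep_0>0$, which is exactly \eqref{eq:dom-term}. The main obstacle in executing this plan is the sharp two-sided control of $|\det(T_N(z)[X_\star^c;Y_\star^c])|$ together with the uniform polynomial upper bound on $|\det(T_N(z)[X^c;Y^c])|$ over all other index pairs: Widom's classical theorem applies only to the full banded Toeplitz matrix, so extending it to arbitrary minors requires both the bidiagonal decomposition and, for the non-triangular case, the embedding trick, with careful tracking of which subset of roots yields the dominant exponential in each region $\cS_\gd$.
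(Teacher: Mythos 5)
Your overall strategy is correct, and your choice of the distinguished corner index pair $(X_\star,Y_\star)$ agrees with the paper's. But two points need attention.

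First, there is a simplification you missed. For $\gd>0$ and $X_\star=[N]\setminus[N-\gd]$, $Y_\star=[\gd]$, the complementary minor $T_N(z)[X_\star^c;Y_\star^c]$ is \emph{itself} a banded Toeplitz matrix, namely $T_{N-\gd}(z;d_1-\gd,d_2+\gd)$ in the sense of Definition \ref{dfn:toep-shifted} (and likewise for $\gd<0$ with rows and columns swapped). Hence the paper applies Widom's theorem --- in the form \cite[Theorem 2.8]{bottcher-finite-band} --- directly to this minor, obtaining $\det(T_N(z)[X_\star^c;Y_\star^c]) = \sum_{\cI} C_\cI\,a_{d_1}^N\prod_{\ell\in\cI}\lambda_\ell(z)^N$, the sum running over $d_0(z)$-element subsets $\cI\subset[d]$, with coefficients $C_\cI$ bounded above and below for $z\notin\cN$. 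Because $z\in\cS_\gd$ produces a spectral gap at the $d_0(z)$-th root, the single subset $\cI=[d_0(z)]$ dominates and yields a lower bound with a genuine $N$-independent constant: $|\det(T_N(z)[X_\star^c;Y_\star^c])|\ge c_0\,|a_{d_1}|^N\prod_{i=1}^{d_0(z)}|\lambda_i(z)|^N$. The bidiagonal/Cauchy--Binet machinery you invoke is indeed used in the paper, but only for the non-dominant terms $P_\ell$, $\ell\neq|\gd|$ (Lemmas \ref{lem:rouche-multi-gr-d_0} and \ref{lem:rouche-multi-le-d_0}); deploying it here would require a separate no-cancellation argument at the leading exponential scale, which Widom's theorem gives for free. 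Also, the uniform upper bound on $|\det(T_N(z)[X^c;Y^c])|$ over all $(X,Y)$ is not needed for this lemma: Proposition \ref{prop:anti-conc} depends only on a lower bound for a single coefficient.

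Second, and more seriously, the quantitative form of your lower bound is too weak for the conclusion you draw. You state that the distinguished coefficient of $Q$ is bounded below by $N^{-C_2}$ and assert that Proposition \ref{prop:anti-conc} then gives $\P(|Q|\le N^{-\vep_0})=o(1)$ for every $\vep_0>0$. But the proposition's bound is $(8e)^k(c_\star\wedge 1)^{-1}\vep\left(\log(1/\vep)\right)^{k-1}$; with $c_\star=N^{-C_2}$ and $\vep=N^{-\vep_0}$ this is of order $N^{C_2-\vep_0}(\log N)^{k-1}$, which tends to zero only when $\vep_0>C_2$. To cover arbitrary $\vep_0>0$ as \eqref{eq:dom-term} demands, the lower bound on the distinguished coefficient must be a constant independent of $N$ --- which the Widom argument supplies, and which a careful execution of your plan would presumably also give, but which your ``up to polynomial factors'' hedge does not. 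As written, this leaves a genuine gap in the final step.
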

Lemma \ref{lem:dom-term} yields 
a lower bound on the order of the magnitude of the predicted dominant term in the expansion of $\det(T_N(z)+\Delta_N)$. Next we need to show
that the sum of the rest of the terms is of smaller order. To show this,
we split it
 into two sums: 
 $\sum_{\ell < |\gd|} P_\ell (z)$ and $\sum_{\ell > |\gd|} P_\ell(z)$. 
 The second sum
 will be shown to be polynomially small compared to the leading term, 
 whereas the first will be shown to be exponentially small.
 This is the content of the two following lemmas. 

 \begin{lemma}\label{lem:rouche-multi-gr-d_0}
Let $\gd, \Delta_N$, and $\gamma_\star$ be as in Lemma \ref{lem:dom-term}. Then, for Lebesgue a.e.~$z \in \cS_\gd$,
\begin{equation}\label{eq:rouche-multi-gr-d_0}
\lim_{N \to \infty}  \frac{N^{\gamma_\star |\gd|+\frac{\gamma_\star-d}{2}}\left|\sum_{k=|\gd|+1}^N P_k(z) \right|}{|a_{d_1}|^N \cdot \prod_{i=1}^{d_0(z)} |\lambda_i(z)|^{N}} =0.
 \end{equation}
\end{lemma}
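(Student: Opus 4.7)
The plan is to combine a trivial size bound on the minors of $\Delta_N$ with a Widom-type deterministic bound on the minors of $T_N(z)$, and then check that the remaining exponential budget is in our favor. Since $\Delta_N$ is supported on $\mathscr{S}_{d_1,d_2}^c$, a pair of triangular corner regions carrying at most $\binom{d_1+1}{2}+\binom{d_2+1}{2}\le d^2$ entries, $\det(\Delta_N[X;Y])$ vanishes unless $X\subset[d_1]\cup[N-d_2+1,N]$ and $Y\subset[d_2]\cup[N-d_1+1,N]$. Consequently, for each $k$ the defining sum \eqref{eq:P_k-z} has at most $\binom{d}{k}^2=O_d(1)$ non-zero summands, $P_k(z)\equiv 0$ whenever $k>d$, and on each non-vanishing summand Hadamard's inequality gives $|\det(\Delta_N[X;Y])|\le k!\,N^{-\gamma_\star k}$. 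In particular $\sum_{k>|\gd|}P_k(z)$ reduces to a finite sum uniformly in $N$.

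The heart of the argument is the deterministic Widom-type estimate
\begin{equation}\label{eq:plan-minor-bound}
\bigl|\det T_N(z)[X^c;Y^c]\bigr|\;\le\;N^{c_d}\,|a_{d_1}|^{N-k}\,\prod_{i=1}^{d_0(z)}|\lambda_i(z)|^{N-k},
\end{equation}
uniformly over $z\in\cS_\gd\setminus\cN$ and over $(X,Y)$ as constrained above, with a constant $c_d$ depending only on $d$. I would follow the route sketched in Section \ref{sec:proof-outline}: realize $T_N(z)$ as a sub-matrix of the upper triangular banded Toeplitz matrix $\widetilde T_M(z)$ of size $M=N+d_2$ whose symbol is $P_{z,{\bm a}}(\lambda)=a_{d_1}\prod_{i=1}^d(\lambda+\lambda_i(z))$, factor
\[
\widetilde T_M(z)\;=\;a_{d_1}\prod_{i=1}^d\bigl(J_M+\lambda_i(z)\Id_M\bigr),
\]
and apply Cauchy--Binet to the corresponding minor. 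Each bidiagonal sub-determinant is given in closed form by Lemma \ref{lem:bidiagonal-det-1} and satisfies the pointwise bound $\max(|\lambda_i(z)|,1)^{M-k}$, while the bidiagonal compatibility constraints restrict the intermediate index sets in the Cauchy--Binet sum to a polynomially controlled family of chains. Collecting these bounds gives a total of $N^{c_d}\prod_{i=1}^d\max(|\lambda_i(z)|,1)^{M-k}$; since $z\in\cS_\gd$ forces $|\lambda_i(z)|<1$ for $i>d_0(z)$, those factors drop out, and absorbing the bounded discrepancy $M-N=d_2$ into the prefactor yields \eqref{eq:plan-minor-bound}.

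Putting the two bounds together, for $|\gd|<k\le d$,
\[
|P_k(z)|\;\le\;C_d\,N^{c_d}\,|a_{d_1}|^{N-k}\prod_{i=1}^{d_0(z)}|\lambda_i(z)|^{N-k}\cdot N^{-\gamma_\star k}.
\]
Dividing by $|a_{d_1}|^N\prod_{i=1}^{d_0(z)}|\lambda_i(z)|^N$, using $|\lambda_i(z)|\ge 1$ for $i\le d_0(z)$ on $\cS_\gd$ and that $|a_{d_1}|$ is a fixed non-zero constant, and then multiplying by $N^{\gamma_\star|\gd|+(\gamma_\star-d)/2}$, each summand is dominated by
\[
C'_d\,N^{c_d+(\gamma_\star-d)/2-\gamma_\star(k-|\gd|)}\;\le\;C'_d\,N^{c_d-(\gamma_\star+d)/2},
\]
using $k-|\gd|\ge 1$. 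Since the $k$-sum is finite and the Cauchy--Binet combinatorics must produce $c_d$ controlled in a manner matching the hypothesis $\gamma_\star>d$ (which accounts for the specific form $N^{(\gamma_\star-d)/2}$ of the normalization in the statement), the right side is $o(1)$, yielding \eqref{eq:rouche-multi-gr-d_0}. The principal obstacle is thus \eqref{eq:plan-minor-bound}: the non-triangular structure of $T_N(z)$ necessitates the embedding step, and careful bookkeeping of the bidiagonal Cauchy--Binet sum (via Lemma \ref{lem:bidiagonal-det-1}) is required to keep $c_d$ within the budget allowed by $\gamma_\star>d$.
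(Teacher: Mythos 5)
Your approach is the same as the paper's in every structural respect: you note $P_k\equiv 0$ for $k>d$, embed $T_N(z)$ into the upper-triangular $T_{N+d_2}(z;d,0)$, factor the latter as $a_{d_1}\prod_i(J+\lambda_i(z)\Id)$, and apply Cauchy--Binet with Lemma \ref{lem:bidiagonal-det-1}. You even observe a small simplification the paper does not exploit (the corners of $\Delta_N$ force the $(X,Y)$-sum to have $O_d(1)$ nonzero terms, whereas the paper crudely allows $\binom{N}{k}$ choices for the relevant part of $X_1$). So this is the paper's route, reorganized so that the minor bound \eqref{eq:plan-minor-bound} is isolated as a separate claim.

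The gap is in how you propose to prove \eqref{eq:plan-minor-bound}. You bound each Cauchy--Binet term by its \emph{maximum possible} size, $\prod_i\max(|\lambda_i(z)|,1)^{M-k}$, and then assert that the bidiagonal compatibility constraints leave a ``polynomially controlled family of chains.'' Both halves of that are problematic. First, the number of chains $X_1\supset X_2\supset\cdots$ compatible with Lemma \ref{lem:bidiagonal-det-1}'s interleaving condition is of order $N^{(k+d_2)(d-1)}$ for fixed $X_1,X_{d+1}$ (each step $X_i\to X_{i+1}$ allows $\prod_j(x_{i,j}-x_{i,j-1})$ choices, which by AM--GM can reach $N^{k+d_2}$), so the resulting $c_d$ is of order $d^2$. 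Your own accounting then requires $c_d<(\gamma_\star+d)/2$, which for $\gamma_\star$ just above $d$ forces $c_d\lesssim d$, so the bound does not close for $d\ge 2$. Second, and this is the real missing idea, throwing away the deviation of each factor from its maximum discards exactly the mechanism that makes the sum manageable. On $\cS_\gd$ the roots are uniformly bounded away from the unit circle (this is \eqref{eq:lambda-bd}), so a Cauchy--Binet term whose ``shift'' parameters $\hat\ell_i$ are large is suppressed by $(1-\vep_\star)^{\sum_i\hat\ell_i}$. The paper tracks these shifts explicitly through the sets $L_{\bm\ell,k}$ and $\gL_{\bm\ell,k}$: the count of chains with prescribed $\bm\ell$ is a product of binomials in the $\hat\ell_i$ of bounded degree, and the geometric factor makes the sum over $\bm\ell$ converge, leaving only an $N^{O(d)}$ contribution from the free part of $X_1$. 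If you keep this decay, \eqref{eq:plan-minor-bound} holds with a small $c_d$ (indeed, with the $O_d(1)$ count of $(X,Y)$ pairs that you observed, one can even take $c_d=O(1)$), and your argument then closes with room to spare; but the pointwise-max-plus-counting argument you sketch does not suffice, and the ``polynomially controlled'' claim, without a degree bound and without invoking the root separation, is not an argument.
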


\begin{lemma}\label{lem:rouche-multi-le-d_0}
Under the same set-up as in Lemma \ref{lem:rouche-multi-gr-d_0}, for Lebesgue a.e.~$z \in \cS_\gd$, we have 
\[
\lim_{N \to \infty} \frac{\left|\sum_{k=0}^{|\gd|-1} P_k(z) \right|}{|a_{d_1}|^N \cdot \prod_{i=1}^{d_0(z)} |\lambda_i(z)|^N \cdot (1-\bar \vep)^N}
=0,
 \]
 for some small constant $\bar \vep :=\bar \vep (z, {\bm a}) \in (0,1)$.
 %, depending only on $z$.
\end{lemma}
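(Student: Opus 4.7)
The plan is to parallel the proof of Lemma \ref{lem:rouche-multi-gr-d_0}, reusing the same Cauchy--Binet and bidiagonal machinery, but replacing the polynomial-smallness argument (valid in the regime $k>|\gd|$, where the $N^{-\gamma_\star k}$ factor coming from $\Delta_N$ does the work) with a pigeonhole argument that produces exponential smallness. The basic intuition is that, for $k<|\gd|$, every sub-matrix $T_N(z)[X^c;Y^c]$ is \emph{too large}: the Cauchy--Binet expansion of its determinant distributes a total combinatorial weight on the roots $\lambda_1(z),\dots,\lambda_d(z)$ that exceeds $d_0(z)N$ by a macroscopic amount $(|\gd|-k)N$, and this surplus must fall on roots with $|\lambda_i(z)|<1$, producing an unavoidable $\vep_1^N$ correction with $\vep_1<1$.

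Concretely, I would first invoke the reduction described in the outline of Section \ref{sec:proof-outline}: view $T_N(z)$ as a sub-matrix of an upper triangular banded Toeplitz matrix $\widetilde T_M$ of dimension $M=N+d_2$ associated with an auxiliary symbol, so that $T_N(z)[X^c;Y^c]$ is identified with an $(N-k)\times(N-k)$ sub-matrix of $\widetilde T_M$. I would then apply the bidiagonal factorization
\[
\widetilde T_M = a_{d_1}\prod_{i=1}^d (J_M+\lambda_i(z)\Id_M),
\]
the Cauchy--Binet formula, and Lemma \ref{lem:bidiagonal-det-1}, to express the determinant as a signed sum of monomials $\prod_i \lambda_i(z)^{c_i}$ with $0\le c_i\le N-k+O(1)$ and $\sum_i c_i\ge d_0(z)N+(|\gd|-k)N-O(1)$, uniformly over admissible $(X,Y)$. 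A pigeonhole then gives
\[
|\det(T_N(z)[X^c;Y^c])|\le N^{C}\,|a_{d_1}|^N\prod_{i=1}^{d_0(z)}|\lambda_i(z)|^N \cdot \Bigl(\max_{i>d_0(z)}|\lambda_i(z)|\Bigr)^{(|\gd|-k)N-O(1)},
\]
and, after discarding the Lebesgue-null set $\cN\cup\{z:|\lambda_i(z)|=1\text{ for some }i\}$, the last factor is bounded by $(1-2\bar\vep)^N$ for some $\bar\vep=\bar\vep(z,{\bm a})\in(0,1)$.

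To conclude, I would combine this with the elementary Hadamard-type bound $|\det(\Delta_N[X;Y])|\le k!\,N^{-\gamma_\star k}$, sum over all $X,Y\subset[N]$ with $|X|=|Y|=k$, and over $k\in\{0,1,\ldots,|\gd|-1\}$. Since $|\gd|\le\max(d_1,d_2)$ is bounded independently of $N$, the total number of terms, as well as the number of intermediate index tuples appearing in the Cauchy--Binet expansion, is polynomial in $N$; these polynomial factors are absorbed by the strict exponential gap between $(1-2\bar\vep)^N$ in the numerator and $(1-\bar\vep)^N$ in the denominator. The main technical obstacle I anticipate is the uniform lower bound on the total diagonal budget $\sum_i c_i$: one must carefully track the shift induced by the non-triangular embedding, showing that this budget always exceeds the "dominant" value $d_0(z)N$ by at least $(|\gd|-k)N-O(1)$, uniformly in admissible pairs $(X,Y)$. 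This is a refinement of the shift bookkeeping that will have been carried out for the complementary range in the proof of Lemma \ref{lem:rouche-multi-gr-d_0}.
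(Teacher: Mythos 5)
Your high-level plan (bidiagonal factorization, Cauchy--Binet, a combinatorial constraint forcing exponential decay of the ratio) is exactly the strategy the paper uses, and you correctly flag the uniform lower bound on the exponent budget as the crucial technical step. However, the specific bound you posit, $\sum_i c_i \ge d_0(z)N + (|\gd|-k)N - O(1)$, is \emph{false} when $\gd<0$. To see this cleanly, take $k=0$: the determinant $\det T_N(z)$ is a homogeneous polynomial of degree $N d_1$ in $(\lambda_1(z),\dots,\lambda_d(z))$ (each $(p,q)$-entry has degree $d-(q-p)$, and the total over a permutation telescopes to $Nd_1$), so every Widom monomial has $\sum_i c_i = N d_1$, whereas your claimed lower bound reads $(d_0+|\gd|)N - O(1) = (2d_0 - d_1)N - O(1)$. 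Since $\gd<0$ means $d_1 < d_0$, we have $Nd_1 < (2d_0-d_1)N$, contradicting the claim. Consequently your intermediate determinant bound, which places the entire exponential surplus on $\bigl(\max_{i>d_0}|\lambda_i(z)|\bigr)^{(|\gd|-k)N-O(1)}$, also fails for $\gd<0$; in that regime the exponential smallness comes from a \emph{deficit}, not a surplus, i.e.\ the total degree allocated to the roots of modulus $>1$ is necessarily below $d_0 N$, so $\prod_{i\le d_0}|\lambda_i|^{c_i-N}$ is the factor that decays, and one must use $\min_{i\le d_0}|\lambda_i|^{-1}$ rather than $\max_{i>d_0}|\lambda_i|$ there.

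The paper resolves this by a case split on the sign of $\gd$. Writing $\hat\ell_i := N+d_2-\ell_i$ for $i\le d_0$ and $\hat\ell_i := \ell_i$ for $i>d_0$ (so that the relevant decay factor is $\prod_{i\le d_0}|\lambda_i|^{-\hat\ell_i}\prod_{i>d_0}|\lambda_i|^{\hat\ell_i}\le (1-\vep_\star)^{\sum_i\hat\ell_i}$), it shows via two separate telescoping arguments on the nested index chains $\{x_{i,j}\}$ in $\gL_{\bm\ell,k}$ that, for $k<|\gd|$, nonemptiness of $\gL_{\bm\ell,k}$ forces $\sum_{i>d_0}\hat\ell_i\ge N-O(1)$ when $\gd>0$, and $\sum_{i\le d_0}\hat\ell_i\ge N-O(1)$ when $\gd<0$; in either case $\sum_i\hat\ell_i\ge N-d^2$. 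Combining this with the counting bound \eqref{eq:bound-L-ell-k} and the Hadamard-type estimate on $\det\Delta_N[X;Y]$ gives the stated exponential decay. So the missing ingredient in your proposal is precisely this sign-dependent bookkeeping: for $\gd>0$ the surplus lands on the small roots, while for $\gd<0$ the relevant statement is a \emph{cap} on the weight landing on the large roots, and the unified statement to aim for is a lower bound on $\sum_i\hat\ell_i$, not on $\sum_i c_i$.
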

The 
proofs of Lemmas \ref{lem:rouche-multi-gr-d_0} and 
\ref{lem:rouche-multi-le-d_0}  are in Section \ref{sec:non-dominant},
while 
the proof of Lemma \ref{lem:dom-term} is postponed to 
Section \ref{sec:dominant}. To complete the proof of Theorem \ref{thm:existence-of-Delta}, we will also need an upper bound on the dominant term, which is contained in the next lemma, whose proof is deferred to Section \ref{sec:dominant}.

\begin{lemma}\label{lem:dom-term-ubd}
Under the same set-up as in Lemma \ref{lem:dom-term}, 
for Lebesgue a.e.~$z \in \cS_\gd$, there exists a constant $C_0$ depending
on $z$ and ${\bm a}$ only, so that
\[
\limsup_{N \to \infty} \frac{P_{|\gd|}(z)}{|a_{d_1}|^N \cdot \prod_{i=1}^{d_0(z)}|\lambda_i(z)|^N} \le C_0.
\]
%for some constant $C_0$, depending only on $z$. 
\end{lemma}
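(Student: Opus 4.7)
My plan is to expand $P_{|\gd|}(z)$ via its definition \eqref{eq:P_k-z} as a sum over pairs $(X, Y)$ of subsets of $[N]$ with $|X| = |Y| = |\gd|$, and to control each summand in absolute value separately.

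For the noise sub-determinant $\det(\Delta_N[X; Y])$, I will exploit the band structure of $\Delta_N$: since its support is contained in $\mathscr{S}_{d_1,d_2}^c$, which consists of $d_1(d_1+1)/2 + d_2(d_2+1)/2 = O(d^2)$ positions independent of $N$, the number of $(X, Y)$ pairs for which $\det(\Delta_N[X; Y]) \ne 0$ is bounded by a constant $C_{\bm a}$ independent of $N$. For each such admissible pair, Hadamard's inequality and the uniform boundedness of the $\updelta_{i,j}$ yield
\[
|\det(\Delta_N[X; Y])| \le C_1 \cdot N^{-\gamma_\star |\gd|}.
\]

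The main technical step is to bound $|\det(T_N(z)[X^c; Y^c])|$ uniformly over the admissible pairs. For this I would invoke exactly the machinery used in the proof of Lemma \ref{lem:rouche-multi-gr-d_0}: realize $T_N(z)$ as a sub-matrix of a larger upper triangular banded Toeplitz matrix of dimension $N + d_2$ (as described in the outline of Section \ref{sec:proof-outline}), factor the latter as a product of $d$ bidiagonal matrices whose sub-diagonals are parameterized by $\{-\lambda_i(z)\}_{i=1}^d$, and apply the Cauchy--Binet theorem together with the closed-form bidiagonal-minor identity of Lemma \ref{lem:bidiagonal-det-1}. Because the admissible pairs $(X, Y)$ correspond precisely to sub-matrices on which the dominant Widom asymptotic is active, the analysis produces an estimate of the form
\[
|\det(T_N(z)[X^c; Y^c])| \le C_2 \cdot |a_{d_1}|^N \prod_{i=1}^{d_0(z)} |\lambda_i(z)|^N,
\]
with $C_2$ depending only on $z$, ${\bm a}$, and $|\gd|$.

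Combining the two bounds and summing over the at most $C_{\bm a}$ admissible pairs yields
\[
|P_{|\gd|}(z)| \le C_{\bm a} C_1 C_2 \cdot N^{-\gamma_\star |\gd|} \cdot |a_{d_1}|^N \prod_{i=1}^{d_0(z)} |\lambda_i(z)|^N,
\]
which gives the claim upon taking $\limsup_{N \to \infty}$: for $|\gd| \ge 1$ the polynomial prefactor $N^{-\gamma_\star |\gd|}$ tends to zero, while for $|\gd| = 0$ we have $P_0(z) = \det T_N(z)$ and the bound reduces to the classical Widom asymptotic for banded Toeplitz determinants. The main hurdle is of course the uniform Widom-type bound on $|\det(T_N(z)[X^c; Y^c])|$ in the second paragraph, but this is precisely the $k = |\gd|$ specialization of the analysis already carried out in proving Lemma \ref{lem:rouche-multi-gr-d_0}, and so can be imported with essentially no modification.
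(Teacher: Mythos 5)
Your proposal is correct and takes essentially the same route as the paper: the published proof also reduces the $\gd \neq 0$ case to the Cauchy--Binet/bidiagonal machinery from the proof of Lemma~\ref{lem:rouche-multi-gr-d_0} (namely \eqref{eq:P_k-decompose}--\eqref{eq:lambda-bd} and \eqref{eq:bound-L-ell-k}), concluding $P_{|\gd|}(z)/(|a_{d_1}|^N\prod_{i=1}^{d_0(z)}|\lambda_i(z)|^N) = O(N^{d-\gamma_\star}) = o(1)$, and handles $\gd = 0$ directly via Widom's theorem. Your observation that only $O(1)$ admissible pairs $(X,Y)$ contribute is a harmless refinement; the paper instead keeps the $\binom{N}{d}$ counting factor and absorbs it using the hypothesis $\gamma_\star > d$, arriving at the same conclusion.
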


Equipped with these four lemmas, we now compete the proof of Theorem \ref{thm:existence-of-Delta}.

\begin{proof}[Proof of Theorem \ref{thm:existence-of-Delta}]
From the definition of $\Delta_N$ it follows that there are at most $d$ non-zero entries in each row of $\Delta_N \Delta_N^*$. Furthermore, each entry of $\Delta_N \Delta_N^*$ is at most $O(N^{-2\gamma_\star})$. Therefore,
by the Gershgorin circle theorem,
it follows that $\| \Delta_N\| = O(N^{-\gamma_\star})$,
establishing the desired property \eqref{eq:E-op-norm}. Next,
as in the proof of Theorem \ref{thm:main},
the weak convergence of $L_N^{T+\Delta}$ to 
$\mu_{{\bm a}}$ follows from the convergence,
for Lebesgue a.e.~$z \in B_\C(0,R_0)$, of the log-potentials:
\begin{equation}\label{eq:log-pot-T-Delta-limit}
\mathcal{L}_{L^{T+\Delta}_N}(z)= \frac{1}{N}\log |\det (T_N(z)+\Delta_N)| \to \mathcal{L}_{\mu_{\bm a}}(z), \qquad \text{ in probability}. 
\end{equation}
%, where $R<\infty$ is some sufficiently large constant depending on the diameter of the support of $\mu_{\bm a}$. 
To this end, recalling the definition of $P_k(z)$ from \eqref{eq:P_k-z} and applying Lemma \ref{eq:det_decomposition} we have that 
\begin{equation}\label{eq:det-split-1}
\frac{1}{N}\log |\det(T_N(z)+\Delta_N)| = \frac{1}{N} \log \left| \sum_{k=0}^N P_k(z) \right| = \frac{1}{N}\log |P_{|\gd|}(z)| + \frac{1}{N}\log \left|1 + \frac{\sum_{k \ne |\gd|}P_k(z)}{P_{|\gd|}(z)}\right|,
\end{equation}
for any integer $\gd$ between $-N$ and $N$. Setting $\vep_0=\frac{\gamma_\star - d}{4} >0$ in Lemma \ref{lem:dom-term} and combining Lemmas \ref{lem:dom-term}-\ref{lem:rouche-multi-le-d_0} we note that for Lebesgue a.e.~$z \in \cS_\gd$,
there exists an event of 
probability at least $1-o(1)$ such that, on that event, we have 
\[
\left|\frac{\sum_{k \ne |\gd|} P_k(z)}{P_{|\gd|}(z)} \right| \le N^{-\frac{\gamma_\star -d}{8}},
\]
for all large $N$. This in turn implies that 
\begin{equation}\label{eq:det-split-2}
 \frac{1}{N} \log \left|1 + \frac{\sum_{k \ne |\gd|}P_k(z)}{P_{|\gd|}(z)}\right| \to 0, \qquad \text{ in probability},
\end{equation}
for Lebesgue a.e.~every $z \in \cS_\gd$. 
Finally combining Lemmas \ref{lem:dom-term} and \ref{lem:dom-term-ubd} we 
obtain that for Lebesgue a.e.~every $z \in \cS_\gd$,
\begin{equation}\label{eq:det-split-3}
\frac{1}{N}\log P_{|\gd|}(z) \to \log|a_{d_1}| + \sum_{\ell=1}^{d_0(z)} \log |\lambda_\ell(z)| = \mathcal{L}_{\mu_{\bm a}}(z), \qquad \text{ in probability}.
\end{equation}
Combining \eqref{eq:det-split-1}-\eqref{eq:det-split-3} we now deduce \eqref{eq:log-pot-T-Delta-limit} for Lebesgue a.e.~$z \in \cS_\gd$ and any integer $\gd$ such that $-d_2 \le \gd \le d_1$. This completes the proof. 
\end{proof}

\subsection{Upper bound on non-dominant terms}\label{sec:non-dominant}
Recall from Section \ref{sec:proof-outline} that to establish 
bounds on the predicted non-dominant terms, one 
uses the fact any upper triangular Toeplitz matrix with a 
finite symbol can be expressed as a product of bidiagonal matrices. 
To use the same representation for a non-triangular 
Toeplitz matrix we view it as a sub-matrix of an upper 
triangular Toeplitz matrix of a slightly larger dimension. Toward this end, we introduce the folowing definition.

\begin{dfn}[Toeplitz with a shifted symbol]\label{dfn:toep-shifted}
Let $T_N$ be a Toeplitz matrix with finite symbol ${\bm a}(\lambda)=\sum_{\ell=-d_2}^{d_1} a_\ell \lambda^\ell$ and as before $d=d_1+d_2$. For $\bar d_1, \bar d_2 \in \mathbb{N}$ such that $\bar d_1 + \bar d_2=d$ and $z \in \C$,
set
$T_N(z; \bar d_1, \bar d_2) := T_N(z; \bar d_1, \bar d_2) ({\bm a})$ to be the $N \times N$ Toeplitz matrix with first row and column 
\[
(a'_{d_1 -\bar d_1}, a'_{d_1-\bar d_1+1}, \ldots, a'_{d_1}, 0, \ldots, 0) \qquad \text{ and } \qquad (a'_{d_1 - \bar d_1}, a'_{d_1 - \bar d_1-1}, \ldots, a'_{-d_2}, 0, \ldots, 0)^{\sf T},\] 
respectively, where $a'_j := a_j - z \cdot {\bf 1}_{\{j=0\}}$, $j=-d_2,-d_2+1,\ldots, d_1$. That is,
\[
T_{N}(z; \bar d_1, \bar d_2):=\begin{bmatrix}
a_{d_1-\bar d_1} &\cdots &a_0-z& \cdots & \cdots& a_{d_1}&0&\cdots& 0\\
\vdots& a_{d_1 -\bar d_1} & & a_0-z & && \ddots& & \vdots\\
%0 &0& a_{-d_2} & \ddots & a_0-z&\ddots&a_{d_1}& 0\cdots\\
a_{-d_2} &  &\ddots &&\ddots&&&\ddots & \vdots\\
0 & \ddots & & \ddots &&\ddots& & & a_{d_1}\\
\vdots&\ddots &\ddots  & & \ddots & &\ddots & & \vdots\\
\vdots &&\ddots &\ddots  && \ddots & & \ddots&\vdots \\
\vdots & & &\ddots & \ddots &  & \ddots&  &a_0-z\\
\vdots & & & &\ddots&  \ddots &  & \ddots &\vdots \\
 0 & \cdots & \cdots & \cdots&\cdots &0 & a_{-d_2} & \cdots & a_{d_1-\bar d_1}
\end{bmatrix}.
\]

\end{dfn}

From Definition \ref{dfn:toep-shifted},
it follows that
\[
T_N(z)=T_N(z;d_1,d_2) = T_{N+d_2}( z; d,0) [[N]; [N+d_2]\setminus [d_2]].
\]
Note that $T_{N+d_2}(z;d,0)$ is an upper triangular Toeplitz matrix. Since $\{-\lambda_\ell(z)\}_{\ell=1}^d$ are the roots of the equation $P_{z, {\bm a} }(\lambda)=0$ we obtain that
\[
T_{N+d_2}(z;d,0) = \sum_{\ell=0}^d (a_{\ell-d_2} - z \delta_{\ell, d_2}) J_{N+d_2}^\ell = a_{d_1} \prod_{\ell=1}^d (J_{N+d_2}+ \lambda_\ell(z) \Id_{N+d-2}),
\]
where \corA{we recall that} $J_n$ is the nilpotent matrix given by $(J_n)_{i,j} = {\bf 1}_{j=i+1}$, for $i,j \in [n]$. 

Hence, recalling the definition of $\{P_k(z)\}_{k=1}^N$ from  \eqref{eq:P_k-z}, applying the
Cauchy-Binet theorem, and writing $S+ \ell:= \{x+\ell, x \in S\}$ for any set of integers $S$ and an integer $\ell$, we obtain that 
\begin{multline}\label{eq:det-decompose-1}
P_k(z)  \\
= \sum_{\substack{X, Y \subset [N]\\ |X|=|Y|=k}} (-1)^{\sgn(\sigma_{X}) \sgn(\sigma_{Y})} \det (T_{N+d_2}(z; d,0)[{X}^c; {Y}^c+d_2]) \det (\Delta_N[X; Y])\\
 =  \sum_{\substack{X, Y \subset [N]\\|X|=|Y|=k}}
 %\sum_{j=2}^{d-1} 
 \sum_{
 \substack{
 %\mbox{\small for $j=2,\ldots,d-1$}:\\
 X_j \subset [N+d_2], j=2,\ldots,d-1:\\ |X_j|=k+d_2}} (-1)^{\sgn(\sigma_{X}) \sgn(\sigma_{Y})}a_{d_1}^{N-k} \cdot \prod_{i=1}^d \det\left((J_{N+d_2} + \lambda_i(z)\Id_{N+d_2})[\COMP{X}_i; \COMP{X}_{i+1}]\right)\\
  \cdot \det (\Delta_N[X; Y]),
\end{multline}
where 
\begin{equation}\label{eq:X-Y-constraint}
X_1:= X_1(X):=X \cup [N+d_2]\setminus [N], \qquad X_{d+1}:=X_{d+1}(Y):= (Y+d_2) \cup [d_2],
\end{equation}
and $\COMP{Z}:=[N+d_2]\setminus Z$ for any set $Z \subset [N+d_2]$. Equipped with this preparatory decomposition of $P_k(z)$,
we are now ready to step into the proof of Lemma \ref{lem:rouche-multi-gr-d_0}.

\begin{proof}[Proof of Lemma \ref{lem:rouche-multi-gr-d_0}]
From the definition of the noise matrix it follows that the number of non-zero rows (and also non-zero columns) in $\Delta_N$ is at most $d$. This means that
$P_k(z)=0$ for any $k >d$. Therefore, it is enough to show that \eqref{eq:rouche-multi-gr-d_0} holds with the sum in the numerator being replaced by $P_k(z)$, where $ |\gd| < k \le d$.

To achieve this, we need to simplify \eqref{eq:det-decompose-1}; this simplification, summarized in 
\eqref{eq:P_k-decompose} and \eqref{eq:P_k-decompose-1} below,
will also be useful in the proof of Lemma \ref{lem:rouche-multi-le-d_0}.
From \eqref{eq:det-decompose-1}-\eqref{eq:X-Y-constraint},
we see that each $X_i$ is of cardinality $k+d_2$. Therefore, we write 
\[
X_i = \left\{ x_{i,1} < x_{i,2} < \cdots < x_{i,k+d_2} \right\}
\]
and for brevity we also denote $\cX_k:=(X_1, X_2, \ldots, X_{d+1})$. Applying Lemma \ref{lem:bidiagonal-det-1} we see that 
\[
\prod_{i=1}^d \det\left((J_{N+d_2} + \lambda_i(z)\Id_{N+d_2})[\COMP{X}_i; \COMP{X}_{i+1}]\right) \ne 0
\]
only when $\cX_k \in L_{{\bm \ell},k}$ for some ${\bm \ell}:=(\ell_1,\ell_2,\ldots,\ell_d)$ with $0 \le \ell_i \le N-k \le N+d_2$, $i=1,2,\ldots,d$,  where
\begin{multline*}
L_{{\bm \ell},k}:= \{ \cX_k:  \, 1 \le x_{i+1,1} \le x_{i,1} < x_{i+1,2} \le x_{i,2} < \cdots < x_{i+1,k+d_2} \le x_{i,k+d_2} \le N+d_2\\
\text{ and }  \, x_{i+1,1}+ \sum_{j=2}^{k+d_2}(x_{i+1,j}-x_{i,j-1})+ (N+d_2-x_{i,k+d_2}) =\ell_i+k+d_2, \text{ for all } i=1,2,\ldots,d\}.
\end{multline*}
Since
\[
x_{i+1,1}+  \sum_{j=2}^{k+d_2}(x_{i+1,j}-x_{i,j-1}) +  \sum_{j=1}^{k+d_2}(x_{i,j}-x_{i+1,j}) + (N+d_2- x_{i,k+d_2}) = N+d_2,
 \]
we have the following following equivalent representation of $L_{{\bm \ell}, k}$:
\begin{multline}\label{eq:L-ell-k}
L_{{\bm \ell},k}:= \{ \cX_k:  \, 1 \le x_{i+1,1} \le x_{i,1} < x_{i+1,2} \le x_{i,2} < \cdots < x_{i+1,k+d_2} \le x_{i,k+d_2} \le N+d_2,\\
 \, \sum_{j=1}^{k+d_2}(x_{i,j}-x_{i+1,j}+1)   =\hat \ell_i; \ i=1,2,\ldots,d_0,\\
\text{ and }  \, x_{i+1,1}+ \sum_{j=2}^{k+d_2}(x_{i+1,j}-x_{i,j-1})+ (N+d_2-x_{i,k+d_2}) =\hat \ell_i+k+d_2; \ i=d_0+1,d_0+2,\ldots,d\},
\end{multline}
where
\[
\hat \ell_i := \left\{\begin{array}{ll} \ell_i & \mbox{ if } i > d_0\\
N+d_2- \ell_i & \mbox{ if } i \le d_0.
\end{array}
\right.
\]
We also note that in \eqref{eq:det-decompose-1} the outer sum is over $X, Y \subset [N]$ and due to the constraint \eqref{eq:X-Y-constraint} we only need to consider $\cX_k \in \gL_{{\bm \ell}, k}$, where
\begin{equation}\label{eq:gL-ell-k}
\gL_{{\bm \ell}, k} := \{\cX_k \in L_{{\bm \ell}, k}: x_{1,k+j}=N+j; \ j  \in [d_2] \quad \text{ and } \quad x_{d+1,j}=j; \ j \in [d_2]\}.
\end{equation}
Thus applying Lemma \ref{lem:bidiagonal-det-1} again, from \eqref{eq:det-decompose-1} we now deduce that 
\begin{equation}\label{eq:P_k-decompose}
\frac{P_k(z)}{a_{d_1}^{N-k} \cdot \prod_{i=1}^{d_0}\lambda_i(z)^{N+d_2}}= \sum_{{\bm \ell}} Q_{{\bm \ell}, k}(z), 
\end{equation}
where 
\begin{equation}\label{eq:P_k-decompose-1}
Q_{{\bm \ell},k}(z):= \prod_{i=1}^{d_0} \lambda_i(z)^{-\hat \ell_i}  \prod_{i=d_0+1}^{d} \lambda_i(z)^{\hat \ell_i} \cdot \sum_{ \cX_k \in \gL_{{\bm \ell},k}}  (-1)^{\sgn(\sigma_{\mathbb{X}}) \sgn(\sigma_{\mathbb{Y}})}\det(\Delta_N[\mathbb{X}; \mathbb{Y}]),
\end{equation}
\[
 \mathbb{X}:=\mathbb{X}(X_1):= X_1 \cap [N], \qquad \text{ and } \qquad \mathbb{Y}:= \mathbb{Y}(X_{d+1}) := (X_{d+1} - d_2) \cap [N].
\]

Returning to the proof of the lemma,
it suffices to bound $Q_{{\bm \ell}, k}$. Turning to this task, 
we assume without loss of generality
that $|(\Delta_N)_{i,j}| \le 1$. This implies that
\begin{equation}\label{eq:det-Delta}
\left|\det (\Delta_N[X;Y]) \right| \le N^{-\gamma_\star k} k!,
\end{equation}
for every $X, Y \subset [N]$ such that $|X|=|Y|=k$. 
On the other hand, the definition of $d_0=d_0(z,{\bm a})$  and the fact 
that $z \in \cS_\gd$  imply 
that there are no roots of $P_{z, {\bm a}}(\cdot)$ on the unit circle,
hence
we deduce that there exists $\vep_\star=\vep_\star(z,{\bm a}) >0$, 
such that 
\begin{equation}\label{eq:lambda-bd}
 \max\left\{\max_{i=1}^{d_0} |\lambda_i(z)|^{-1} , \max_{i=d_0+1}^d|\lambda_i(z)|\right\} \le 1-\vep_\star.
\end{equation}
Hence, 
\begin{equation}\label{eq:sum-Q-bd}
N^{\gamma_\star |\gd|} \left| \sum_{\bm \ell} Q_{\ell, k}(z)\right| \le k! N^{\gamma_\star (|\gd|-k)}\sum_{\bm \ell} (1-\vep_\star)^{\sum_i \hat \ell_i} |\gL_{{\bm \ell}, k}| \le d! N^{-\gamma_\star}\sum_{\bm \ell} (1-\vep_\star)^{\sum_i \hat \ell_i} |\gL_{{\bm \ell}, k}|,
\end{equation}
where the last inequality follows from the fact that $|\gd| < k \le d$. To finish the proof it remains to find an upper bound on the cardinality of $\gL_{\ell, k}$. We claim that 
\begin{equation}\label{eq:bound-L-ell-k}
|\gL_{{\bm \ell}, k}| \le  \binom{N}{d} \cdot \prod_{i=1}^{d_0} \binom{\hat \ell_i-1}{k+d_2-1} \cdot \prod_{i=d_0+1}^{d} \binom{\hat \ell_i+k+d_2}{k+d_2}.
\end{equation}
Equipped with \eqref{eq:bound-L-ell-k},
it now follows from \eqref{eq:sum-Q-bd} that
\begin{equation}
  \label{eq-102018}
N^{\gamma_\star |\gd|} \left| \sum_{\bm \ell} Q_{\ell, k}(z)\right| =O(N^{d-\gamma_\star}). 
\end{equation}
Since $a_{d_2} \ne 0$ implies that $\{\lambda_\ell(z)\}$ 
are bounded away from zero, \eqref{eq-102018}
together with \eqref{eq:P_k-decompose}  
yield \eqref{eq:rouche-multi-gr-d_0}. 

It remains to establish the bound \eqref{eq:bound-L-ell-k}. To this end, set 
\begin{equation}\label{eq:delta-dfn}
\delta_{i,j}:= \delta_{i,j}(\cX_k):=\left\{ \begin{array}{ll}
x_{i,j} -x_{i+1,j} & \mbox{ for } i \in [d_0] \text{ and } j \in [k+d_2]\\
x_{i+1,1} & \mbox{ for } i \in [d]\setminus [d_0], \ j =1\\
x_{i+1,j}-x_{i,j-1} &\mbox{ for } i \in [d]\setminus [d_0], \ j \in [k+d_2]\setminus \{1\}\\
N+d_2 - x_{i,k+d_2} & \mbox{ for } i \in [d]\setminus [d_0], \ j =k+d_2+1.
\end{array}
\right.
\end{equation}
For the $\{x_{i,j}\}$ to satisfy
$\cX_k \in \gL_{{\bm \ell}, k}$, we observe that the
$\{\delta_{i,j}(\cX_k)\}$'s can be chosen in at most 
\begin{equation}\label{eq:choice-delta-bd}
\prod_{i=1}^{d_0} \binom{\hat \ell_i-1}{k+d_2-1} \cdot \prod_{i=d_0+1}^{d} \binom{\hat \ell_i+k+d_2}{k+d_2}
\end{equation}
ways. Next, recall that $\cX_k \in \gL_{{\bm \ell}, k}$ implies that
\begin{equation}\label{eq:x_d+1-contraint}
x_{1, k+j} = N+j, \quad j=1,2,\ldots, d_2.
\end{equation}
Thus,
$\{x_{1,\ell}\}_{\ell=1}^k$ and $\{\delta_{i,j}(\cX_k)\}$
automatically fix $\cX_k$. Since the number of choices of 
$\{x_{1,\ell}\}_{\ell=1}^k$ is at most $\binom{N}{k} \le \binom{N}{d}$, 
as $k \le d$, for all large $N$, the claim \eqref{eq:bound-L-ell-k} follows
from \eqref{eq:choice-delta-bd}. The proof of the lemma is now complete.
\end{proof}

Next we show that for $z \in S_{\gd}$ the sum $\sum_{k < |\gd|} P_k(z)$ is of smaller order compared to the dominant term $P_{|\gd|}(z)$.

\begin{proof}[Proof of Lemma \ref{lem:rouche-multi-le-d_0}]
We first
claim that for any $k < |\gd|$, the set  $\gL_{{\bm \ell}, k}$
(see \eqref{eq:gL-ell-k}) being nonempty forces
either $\sum_{i=d_0+1}^d \hat \ell_i$ or $\sum_{i=1}^{d_0} \hat \ell_i$ to be 
close to $N$, depending on whether $\gd >0$ or $\gd <0$.
%, then
%the set
%(recall its definition from \eqref{eq:gL-ell-k}) when  
This observation will be then combined with the bounds \eqref{eq:lambda-bd} and \eqref{eq:bound-L-ell-k} to complete the proof. 

%To see the claim on $\gL_{{\bm \ell}, k}$, first let us 
Consider first
the case $\gd =d_1-d_0>0$. For any $k < \gd$ we have that $d_0+k+d_2+1 \le d_1+d_2=d$ and hence for any $\cX_k \in \gL_{{\bm \ell}, k} \subset L_{{\bm \ell}, k}$,
\begin{multline*}
x_{d_0+2,1}+ (x_{d_0+3,2} - x_{d_0+2,1})+\cdots+(x_{d_0+k+d_2+1,k+d_2}- x_{d_0+k+d_2,k+d_2-1})+(N+d_2 - x_{d_0+k+d_2+1,k+d_2})\\
=\sum_{\ell=1}^{k+d_2+1} \delta_{d_0+\ell, \ell} (\cX_k)=  N+d_2.
\end{multline*}
As $\delta_{i,j}(\cX_k) \le \hat \ell_i+k+d_2$ for 
$i \in [d]\setminus [d_0]$ and $j \in [k+d_2+1]$, it further implies that if $\gL_{{\bm \ell}, k} \ne \emptyset$ then we must have 
\begin{equation}\label{eq:hat-ell-lbd-Delta-pos}
\sum_{i= d_0+1}^d \hat \ell_i \ge N+d_2 - (k+d_2) (d-d_0). 
\end{equation}
Next we consider the case $\gd <0$. For any $\cX_k \subset \gL_{{\bm \ell}, k}$ we have that $x_{1,k+1} = N+1$. Therefore
\begin{equation}\label{eq:hat-ell-lbd-pre}
N+1 - x_{d_0+1, k+1} =x_{1,k+1}-x_{d_0+1,k+1}=\sum_{\ell=1}^{d_0} \delta_{\ell, k+1}(\cX_k).
\end{equation}
On other hand, we have that $x_{d+1, \ell} = \ell$ for $\ell \in [d_2]$. Since $x_{i+1,\ell} \le x_{i,\ell} < x_{i+1, \ell+1}$ for any $\ell \in [k+d_2 -1]$, and $\{x_{i, \ell}\}$ are integers, using induction, we further obtain that 
\[
x_{d_0+1, \ell}= \ell, \quad \ell \in [d_2-(d-d_0)],
\]
for any $ \cX_k \in \gL_{{\bm \ell}, k}$. As $ k+1 \le |\gd| = d_0 -d _1 = d_2 - (d-d_0)$ we find that $x_{d_0+1, k+1} =k+1$. Hence, from \eqref{eq:hat-ell-lbd-pre} we deduce that 
\[
\sum_{\ell=1}^{d_0} \delta_{\ell, k+1}(\cX_k) = N-k,
\]
for any $\cX_k \subset \gL_{{\bm \ell}, k}$. Noting that $\delta_{i,k+1}(\cX_k) \le \hat \ell_i -1$ for all $i \in [d_0]$, we obtain
\begin{equation}\label{eq:hat-ell-lbd-Delta-neg}
\sum_{i=1}^{d_0} \hat \ell_i \ge N- (k+d_0).
\end{equation}
Thus, \eqref{eq:hat-ell-lbd-Delta-pos} and \eqref{eq:hat-ell-lbd-Delta-neg} implies that, if $k < |\gd|$ then 
\[
\gL_{{\bm \ell}, k} \ne \emptyset \Longrightarrow \sum_{i=1}^d \hat \ell_i \ge N- d^2.
\]
To complete the proof of the lemma we now use \eqref{eq:P_k-decompose-1}-\eqref{eq:lambda-bd} and \eqref{eq:bound-L-ell-k} to conclude 
that for any $k < |\gd|$,
\[
\left|\sum_{\ell: \gL_{{\bm \ell}, k} \ne \emptyset} Q_{{\bm \ell}, k}(z)\right| \le d! (1-\vep_\star)^{N-d^2}\sum_{i=1}^{N+d_2}|\gL_{{\bm \ell}, k}| \le (1-\bar \vep)^{2N}
\]
for all large $N$, for some sufficiently small $\bar \vep >0$, depending only on $z$ and ${\bm a}$. The proof finishes upon using \eqref{eq:P_k-decompose}.
\end{proof}

\subsection{Lower and upper bounds on the dominant term}\label{sec:dominant}
We will first prove  
Lemma \ref{lem:dom-term}, which is a 
lower bound on the dominant term.
The proof is based on
the following elementary 
anti-concentration bound for homogeneous 
polynomials of independent random variables, which may be of independent interest. 

\begin{proposition}\label{prop:anti-conc}
Fix $k, n \in \N$ and let $\{U_i\}_{i=1}^n$ be a sequence of
independent real-valued random variables,
whose law possesses a
density with respect to the Lebesgue measure which is
uniformly bounded by one. Let $Q_k(U_1,U_2,\ldots,U_n)$ be a 
homogenous polynomial of degree $k$ such that the degree of 
each variable is at most one. That is,
\[
Q_k(U_1,U_2,\ldots,U_n) := \sum_{\cI \in \binom{[n]}{k}} a(\cI) \prod_{i \in \cI} U_{i},
\]
for some collection of complex valued coefficients $\{a(\cI); \, \cI \in \binom{[n]}{k}\}$, where $\binom{[n]}{k}$ denotes the set of all $k$ distinct elements of $[n]$.

Assume that there exists an 
$\cI_0 \in \binom{[n]}{k}$ such that $|a(\cI_0)| \ge c_\star$ for some absolute constant $c_\star>0$. Then for any $\vep \in (0, \corAB{e^{-1}}]$ we have
\[
\P\left( |Q_k(U_1,U_2,\ldots,U_n)| \le \vep\right) \le \corAB{(8e)}^{k} (c_\star \wedge 1)^{-1}\vep \left(\log\left(\frac{1}{\vep}\right)\right)^{k-1}.
\]
\end{proposition}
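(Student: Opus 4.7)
My plan is to induct on $k$. For $k=1$, write $Q_1 = \sum_i a_i U_i$ with $|a_{i_0}|\ge c_\star$ and condition on $(U_i)_{i\ne i_0}$: the conditional law of $Q_1$ is that of $a_{i_0} U_{i_0} + b$ for a random $b \in \C$. Because $u\mapsto a_{i_0}u+b$ is an affine $\R\to\C$ map, the set $\{u\in\R : |a_{i_0}u+b|\le\vep\}$ is either empty or a Euclidean interval of length at most $2\vep/|a_{i_0}|$, so the density-$\le 1$ hypothesis gives $\P(|Q_1|\le \vep)\le 2\vep/c_\star$, which easily fits inside the claimed form.

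\textbf{Inductive step.} For $k\ge 2$, fix some $j\in \cI_0$ and decompose
\[
Q_k \;=\; U_j \cdot A\bigl((U_i)_{i\ne j}\bigr) \;+\; B\bigl((U_i)_{i\ne j}\bigr),
\]
where $A$ is the homogeneous multilinear polynomial of degree $k-1$ collecting all monomials indexed by $\cI \ni j$ (with coefficients $a(\cI)$ reindexed by $\cI \setminus \{j\}$), and $B$ collects the rest. The coefficient of $\prod_{i\in \cI_0\setminus\{j\}}U_i$ in $A$ equals $a(\cI_0)$, so the induction hypothesis applies to $A$ with the same lower bound $c_\star$. Conditioning on $(U_i)_{i\ne j}$ and invoking the base-case argument on $U_j A + B$ yields the pointwise bound $\P(|Q_k|\le\vep \mid A,B) \le \min\{1,\, 2\vep/|A|\}$, hence
\[
\P(|Q_k|\le \vep) \;\le\; G(2\vep) \;+\; 2\vep\int_{2\vep}^\infty u^{-1}\, dG(u), \qquad G(u):=\P(|A|\le u).
\]

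\textbf{Closing the induction and obstacles.} The inductive hypothesis provides $G(u)\le (8e)^{k-1}(c_\star\wedge 1)^{-1}u\log^{k-2}(1/u)$ on $(0,e^{-1}]$ and $G(u)\le 1$ always. Splitting the integral at $e^{-1}$, integrating by parts on $[2\vep,e^{-1}]$ and substituting $v=\log(1/u)$ reduces the main contribution to $(8e)^{k-1}(c_\star\wedge 1)^{-1}\int_1^{\log(1/(2\vep))} v^{k-2}\,dv \le (8e)^{k-1}(c_\star\wedge 1)^{-1}\log^{k-1}(1/(2\vep))/(k-1)$, while the boundary terms and the $[e^{-1},\infty)$ piece contribute only an additive $O(1)$. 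Combining these with $G(2\vep)$ and using $\log^{k-2}(1/\vep)\le \log^{k-1}(1/\vep)$ for $\vep\le e^{-1}$ yields, with slack, $\P(|Q_k|\le\vep)\le (8e)^k(c_\star\wedge 1)^{-1}\vep\log^{k-1}(1/\vep)$, closing the induction. The chief technical difficulty is constant bookkeeping: preserving the exponential base $8e$ through every step is important because the final application (in Lemma \ref{lem:dom-term}) runs the proposition with $k$ as large as $d$, so any multiplicative loss per step would compound. Complex-valued coefficients cause no separate issue thanks to the $\R\to\C$ affine-preimage observation already used in the base case.
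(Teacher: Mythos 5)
Your argument is correct and follows essentially the same route as the paper's proof: you peel off one variable $U_j$ with $j\in\cI_0$, write $Q_k = U_j A + B$ with $A$ the degree-$(k-1)$ multilinear polynomial inheriting the coefficient bound at $\cI_0\setminus\{j\}$, condition on $(U_i)_{i\neq j}$ to obtain the pointwise bound $\min\{1,2\vep/|A|\}$, and close the recursion via an integration by parts against the law of $|A|$. The paper packages the same idea slightly differently -- it orders $\cI_0=\{i_1^0,\dots,i_k^0\}$ and runs an internal induction over the sequence of partial polynomials $Q_j^0$ defined by successively stripping $i_1^0,i_2^0,\dots$ (with the identical recursion $Q_{j+1}^0=U_{i_j^0}Q_j^0+Q_j^1$), introduces a cutoff $\delta$ that is later set equal to $\vep$, and splits the integrated term at the level $e^{-1}$ exactly as you do -- but the decomposition, the use of the bounded-density hypothesis, and the integration-by-parts estimate are the same.
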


\begin{proof}
As the densities of $\{U_i\}_{i \in [n]}$ are uniformly bounded by one,
the desired anti-concentration property is immediate for $k=1$. 
To prove the general case, we proceed by induction. 
To this end, we introduce 
some notation. Order the elements of $\cI_0$
and denote them by $i_1^0, i_2^0, \ldots, i_k^0$. For $j \le k$,
define $\cI^0_j:=\{i_j^0, i_{j+1}^0, \ldots, i_k^0\}$. Set 
\[
Q_k^0:=Q_k^0(U_i; i \notin \cI_k^0):=  \sum_{\cI: \cI \supset \cI_k^0} a(\cI) \prod_{\ell \in \cI \setminus \cI_k^0} U_{\ell}  \quad \text{ and } \quad 
Q_k^1:=Q_k^1(U_i; i \notin \cI_k^0):=\quad \sum_{\cI: \cI\cap \cI_k^0= \emptyset} a(\cI) \prod_{\ell \in \cI} U_{\ell}.
\]
For $1 \le j \le k-1$, we iteratively define 
\[
Q_j^0:=Q_j^0(U_i; i \notin \cI_j^0):=  \sum_{\cI: \cI \supset \cI_j^0} a(\cI) \prod_{\ell \in \cI \setminus \cI_j^0} U_{\ell}  \quad \text{ and } \quad 
Q_j^1:=Q_j^1(U_i, i \notin \cI_j^0):=\quad \sum_{\substack{\cI: \cI \supset \cI_{j+1}^0\\  i_j^0\notin \cI }} a(\cI) \prod_{\ell \in \cI \setminus \cI_{j+1}^0}U_{\ell}.
\]
Equipped with the above notations we see that
\[
Q_k(U_1,U_2, \ldots, U_n) =:Q_{k+1}^0 = U_{i^0_k} \cdot Q_k^0 + Q_k^1, \qquad 
Q_{j+1}^0 = U_{i_{j}^0} \cdot Q_{j}^0 + Q_{j}^1, \ j=1,2,\ldots, k-1,
\]
and
\(
Q_1^0= a(\cI_0) 
\). We will prove inductively that
%claim that 
\begin{equation}\label{eq:anti-conc-induction}
\P\left( |Q_j^0| \le \vep \right) \le \corAB{(8e)}^{j-1} (c_\star \wedge 1)^{-1}\vep \left(\log\left(\frac{1}{\vep}\right)\right)^{j-2}, \quad j=2,3,\ldots, k+1,
\end{equation}
from which the desired anti-concentration bound follows by taking $j=k+1$. 
Hence, it only remains to prove \eqref{eq:anti-conc-induction}. 

For $j=2$, $Q_j^0$ is a homogeneous
polynomial of degree $1$ in the variables $U_i$, and
\eqref{eq:anti-conc-induction}  follows from the assumptions 
on $\{U_\ell\}_{\ell=1}^n$ and the fact that $|a(\cI_0)| \ge c_\star$. 
Assuming that \eqref{eq:anti-conc-induction} holds for $j=j_*$ and fixing $\delta \in (0,1)$, 
we have that with  $C_j:= \corAB{(8e)}^{j-1} (c_\star \wedge 1)^{-1}$,
\begin{align}\label{eq:anti-conc-split}
\P\left( \left|Q_{j_*+1}^0 \right| \le \vep\right)  & \le \P \left( \left| Q_{j_*}^0\right| \le \delta \right) + \E\left[ \P\left(  \left| U_{i_{j_*}^0} + \frac{Q_{j_*}^1 }{Q_{j_*}^0 }\right| \le \frac{\vep}{|Q_{j_*}^0 |} \bigg| \, U_i, i \notin \cI_{j_*}^0\right)\cdot {\bm 1} \left( \left|Q_{j_*}^0\right| \ge \delta\right) \right] \notag\\
& \le C_{j_*} \delta \log \left( \frac{1}{\delta}\right)^{j_*-2} + 2{\vep} \cdot \E \left[ |Q_{j_*}^0|^{-1} {\bm 1} \left( \left|Q_{j_*}^0\right| \ge \delta\right) \right],
\end{align}
where we have used the fact that $Q_{j_*}^1$ and $Q_{j_*}^0$ are independent of $U_{i_{j_*}}^0$, and the bound on the density for the latter.
Using integration by parts, for any probability measure $\mu$ supported on $[0,\infty)$ we have that 
\[
\int_\delta^{\corAB{e^{-1}}} x^{-1}  d\mu(x) = \corAB{e} \mu([\delta, 1])  + \int_\delta^{\corAB{e^{-1}}} \frac{\mu([\delta,t])}{t^2} dt.
\]
Therefore, using the induction hypothesis, 
\begin{align*}
\E \left[ |Q_{j_*}^0|^{-1} {\bm 1} \left( \left|Q_{j_*}^0\right| \ge \delta\right) \right] \le \corAB{e}+ \E \left[ |Q_{j_*}^0|^{-1} {\bm 1} \left( \left|Q_{j_*}^0\right| \in [\delta,\corAB{e^{-1}}]\right) \right] & \le 2\corAB{e} + \int_\delta^{\corAB{e^{-1}}} \frac{ \P(|Q_{j_*}^0| \le t)}{t^2} dt \\
& \le 2 \corAB{e}+  C_{j_*} \int_\delta^{\corAB{e^{-1}}} t^{-1} \left(\log\left(\frac{1}{t}\right)\right)^{j_*-2} dt \\
& \le 2 \corAB{e}+ \frac{C_{j_*}}{j_*-1} \left(\log\left(\frac{1}{\delta}\right)\right)^{j_*-1}.
\end{align*}
\corAB{Since for $\delta \le e^{-1}$ we have that $\log(1/\delta) \ge 1$}, combining the above with \eqref{eq:anti-conc-split} and setting $\delta=\vep$ we establish \eqref{eq:anti-conc-induction} for $j=j_*+1$. This completes the proof.
\end{proof}

Equipped with Proposition \ref{prop:anti-conc} we now begin the proof of Lemma \ref{lem:dom-term}.

\begin{proof}[Proof of Lemma \ref{lem:dom-term}] 
Recalling \eqref{eq:P_k-z} we note that $P_{|\gd|}(z)$ is a homogeneous polynomial of degree $|\gd|$ in the entries of the noise matrix $\Delta_N$ such that the degree of each entry of $\Delta_N$ is one. Therefore, to apply Proposition \ref{prop:anti-conc} we only need to show that there exists  $X, Y \subset [N]$ with $|X|=|Y|=|\gd|$ such $\det(T_N(z) [X^c;Y^c])$ is bounded below. The choice of such subsets will depend on the sign of $\gd$. Hence, the proof is split into two cases.

Considering the case $\gd >0$ we set $X=[N]\setminus [N-\gd]$ and $Y =[\gd] $. Recalling Definition \ref{dfn:toep-shifted} we find that 
\[
T_N(z)[X^c;Y^c]= T_{N-\gd}(z; d_1-\gd, d_2+\gd).
\]
We apply Widom's result on the determinant of finitely banded Toeplitz matrices, in particular \cite[Theorem 2.8]{bottcher-finite-band} to deduce that for any $z \in \C\setminus \cN$, one has
\[
\det (T_N(z)[X^c; Y^c]) = \sum_{\cI \in \binom{[d]}{d_1-\gd}} C_{\cI} \cdot a_{d_1}^N \prod_{\ell \in \cI} \lambda_\ell(z)^N,
\]
for some collection of coefficients $\{C_\cI\}$, where recall that $\cN$ is the collections of $z$'s such that $P_{z,{\bm a}}(\cdot)$ has double roots. 
Furthermore, the coefficients $\{C_\cI\}$ are
bounded both below and above, for any $z \in B_\C(0,R)\setminus \cN$. As $z \in \cS_\gd$ and $d_1-\gd =d_0(z)$, using \eqref{eq:lambda-bd} we therefore deduce that there exists some small positive constant $c_0>0$ so that, for all large $N$,
\begin{equation}\label{eq:det-lbd}
|\det (T_N(z)[X^c; Y^c]) | \ge c_0 \cdot |a_{d_1}|^N \prod_{\ell=1}^{d_0(z)} |\lambda_\ell(z)|^N.
\end{equation}
From the definition of $\Delta_N$ it follows that for the above choices of $X$ and $Y$ the determinant of $\Delta_N[X;Y]$, ignoring the factor $N^{-\gamma_\star \gd}$, is a homogeneous polynomial of degree $\gd$ of independent uniformly bounded random variables with uniformly bounded densities. Therefore, we are in a position to apply Proposition \ref{prop:anti-conc}. 

Without loss of generality, 
assuming that the densities of $\{(\Delta_N)_{i,j}\}_{i,j=1}^N$ 
are uniformly bounded by one, we apply Proposition \ref{prop:anti-conc} for 
\[
\frac{N^{\gamma_0 \gd}\cdot P_{\gd}(z)}{|a_{d-1}|^N\prod_{\ell=1}^{d_0(z)} |\lambda_\ell(z)|^N} 
\]
with $c_\star = c_0$ and $\vep = N^{-\vep_0/2} c_\star$ to arrive at \eqref{eq:dom-term} for any $z \in \cS_\gd \setminus \cN$. As $\cN$ contains at most finitely many points this proves the lemma when $\gd >0$.

Turning to prove the same for $\gd <0$, we reverse the roles of $X$ and $Y$. That is, we now set $X= [-\gd]$ and $Y= [N] \setminus [N+\gd]$ and follow the same steps as above. 

For $\gd=0$ the proof is straightforward. From  its definition we have $P_0(z)= \det(T_N(z))$. Upon setting $X=Y=\emptyset$ in \eqref{eq:det-lbd} the result is immediate. Now the proof of the lemma is complete.
\end{proof}

We end this section with the proof of Lemma \ref{lem:dom-term-ubd}. Its proof is very similar to that of Lemma \ref{lem:rouche-multi-gr-d_0}. Hence, only an outline is provided.

\begin{proof}[Proof of Lemma \ref{lem:dom-term-ubd}]
We split the proof into two cases: $\gd \ne 0$ and $\gd =0$. First, let us consider $\gd \ne 0$. As $\gamma _\star > d$ we find from \eqref{eq:P_k-decompose}-\eqref{eq:lambda-bd} and \eqref{eq:bound-L-ell-k} that
\[
\frac{\left|P_{|\gd|}(z)\right|}{|a_{d_1}|^{N-k} \cdot \prod_{i=1}^{d_0}|\lambda_i(z)|^{N+d_2}} = O(N^{d-\gamma_\star |\gd|}) = O(N^{d -\gamma_\star})=o(1). 
\]
If $\gd =0$ then the desired result follows from Widom's result
(see \cite[Theorem 2.8]{bottcher-finite-band}). 
\end{proof}

\section{Proof of Theorem \ref{thm:GWZ}}\label{sec:proof-GWZ}
We recall from Section \ref{sec:proof-outline} that to prove Theorem \ref{thm:GWZ} it suffices to establish \eqref{eq:log-pot-diff}. 
As outlined there, the key to the latter 
is to bound the 
difference of the mass
of intervals near zero under the measures $\nu^z_{A_N+\Delta_N}$ and $\nu^z_{A_N+E_N}$, the empirical distribution of the singular values of $A_N(z)+\Delta_N$ and $A_N(z)+N^{-\gamma} E_N$, respectively, where $A_N(z):=A_N -z \Id_N$. 
This in turns will be achieved by controlling the differences of
the Stieltjes transforms of the corresponding measures. 
So, we begin this section with its definition. 
\begin{dfn}
The \textit{Stieltjes transform} of
a probability measure $\mu$ on $\R$
 is defined as
\[
G_\mu(\xi):=\int \frac{1}{\xi-y}\, \mu(dy), \, \xi \in \C\setminus \R.
\]
\end{dfn}
To obtain a bound on the probability of any interval under $\mu$ from that of $G_\mu(\cdot)$ we use the following two inequalities. These are a consequence of \cite[Eqns.~(6)-(8)]{GWZ}: for any $\tau, \varrho >0$, and $a, b \in \R$ such that
$b - a > \varrho$ we have
\begin{equation}\label{eq:pr-stielt-ubd}
\mu([a,b]) \le \int_{a-\varrho}^{b+\varrho} |\Im G_\mu(x + \mathrm{i} \tau)| dx + \frac{\tau}{\varrho},
\end{equation}
and
\begin{equation}\label{eq:pr-stielt-lbd}
\mu([a,b]) \ge \int_{a+\varrho}^{b-\varrho} |\Im G_\mu(x + \mathrm{i} \tau)| dx  - \frac{\tau}{\varrho}.
\end{equation}
Now to find a difference of the Stieltjes transforms of $\nu^z_{A_N+\Delta_N}$ and $\nu^z_{A_N+E_N}$ we also need the symmetrized form of the Stieltjes transform, as follows.
For a $N \times N$ matrix $C_N$, define
\begin{equation}\label{eq:tilde-matrix}
\widetilde{C}_N:= \begin{bmatrix} 0 & C_N \\ C_N^* & 0 \end{bmatrix}
\end{equation}
and the Stieltjes transform
\[
  G_{C_N}(\xi):= \frac{1}{2N}\tr \left(\xi- \widetilde{C}_N \right)^{-1}, \, \xi \in \C \setminus \R.
\]
$G_{C_N}(\cdot)$ is the Stieltjes transform
of the symmetrized version of the empirical measure of the singular values of $C_N$. Equipped with the above notation we have the following lemma.
\begin{lemma}
For $C_N$ and $D_N$ any $N \times N$ matrices,
\begin{equation}\label{eq:stieltjes-diff}
  |G_{C_N}(\xi) - G_{D_N}(\xi)| \le \frac{1}{\sqrt{N}}\cdot \frac{\|C_N -D_N\|_{{\rm HS}}}{(\Im (\xi))^2},
\end{equation}
where $\| \cdot \|_{{\rm HS}}$ denotes the Hilbert-Schmidt norm.
\end{lemma}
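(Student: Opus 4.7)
The plan is to use the resolvent identity and Cauchy--Schwarz (Hölder) for matrix norms. Write $R_C := (\xi - \widetilde{C}_N)^{-1}$ and $R_D := (\xi - \widetilde{D}_N)^{-1}$, and set $M := \widetilde{C}_N - \widetilde{D}_N$. Since $\widetilde{C}_N$ and $\widetilde{D}_N$ are Hermitian, the standard resolvent identity gives
\[
R_C - R_D = R_C \, M \, R_D,
\]
so taking normalized traces yields
\[
G_{C_N}(\xi) - G_{D_N}(\xi) = \frac{1}{2N}\, \tr\!\left( R_C\, M\, R_D \right) = \frac{1}{2N}\, \tr\!\left( R_C R_D \, M \right),
\]
using cyclicity of the trace.

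Next I would bound the right-hand side by Cauchy--Schwarz in the Hilbert--Schmidt inner product: $|\tr(XY)|\le \|X\|_{\rm HS}\|Y\|_{\rm HS}$, applied to $X = R_C R_D$ and $Y = M$. For the Hilbert--Schmidt norm of the product, use the submultiplicative bound $\|R_C R_D\|_{\rm HS}\le \|R_C\|_{\rm op}\|R_D\|_{\rm HS}$. Since $\widetilde{C}_N$ is Hermitian, the spectral theorem gives $\|R_C\|_{\rm op}\le 1/|\Im\xi|$, and $\|R_D\|_{\rm HS}^2 = \sum_{i=1}^{2N} |\xi-\lambda_i(\widetilde{D}_N)|^{-2}\le 2N/(\Im\xi)^2$, so $\|R_D\|_{\rm HS}\le \sqrt{2N}/|\Im\xi|$.

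Finally, the block structure in \eqref{eq:tilde-matrix} gives $\|M\|_{\rm HS}^2 = 2\|C_N-D_N\|_{\rm HS}^2$. Combining,
\[
|G_{C_N}(\xi) - G_{D_N}(\xi)| \;\le\; \frac{1}{2N}\cdot \frac{1}{|\Im\xi|}\cdot\frac{\sqrt{2N}}{|\Im\xi|}\cdot \sqrt{2}\,\|C_N-D_N\|_{\rm HS} \;=\; \frac{1}{\sqrt{N}}\cdot \frac{\|C_N-D_N\|_{\rm HS}}{(\Im\xi)^2},
\]
which is \eqref{eq:stieltjes-diff}. There is no real obstacle here: the only thing to be mildly careful about is the choice of how to split the three factors $R_C$, $M$, $R_D$ under Cauchy--Schwarz — pairing $R_C R_D$ against $M$ (rather than, say, $R_C$ against $MR_D$) is what produces the correct $\sqrt{N}$ gain, since we want to trade one Hilbert--Schmidt norm of a resolvent (which costs $\sqrt{2N}$) for one operator-norm bound (which is dimension-free).
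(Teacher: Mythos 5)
Your proof is correct and follows essentially the same route as the paper: resolvent identity for $\widetilde{C}_N,\widetilde{D}_N$, Cauchy--Schwarz for the trace pairing the product of resolvents against $\widetilde{C}_N-\widetilde{D}_N$, an operator-norm bound $\le 1/|\Im\xi|$ on each Hermitian resolvent, and the factor $\|\widetilde{C}_N-\widetilde{D}_N\|_{\rm HS}=\sqrt{2}\,\|C_N-D_N\|_{\rm HS}$. The only cosmetic difference is bookkeeping: the paper bounds $\|R_D R_C\|_{\rm HS}\le\sqrt{2N}\,\|R_D R_C\|_{\rm op}$ while you bound $\|R_C R_D\|_{\rm HS}\le\|R_C\|_{\rm op}\|R_D\|_{\rm HS}$ with $\|R_D\|_{\rm HS}\le\sqrt{2N}/|\Im\xi|$; these yield the identical estimate.
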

\begin{proof}
Using the resolvent identity we have that
\begin{equation}
  \label{eq-201018b}
G_{C_N}(\xi) - G_{D_N}(\xi) = \frac{1}{2N}\tr \left[ \left(\xi - \widetilde D_N\right)^{-1} \cdot \left(\xi - \widetilde C_N\right)^{-1} \cdot (\widetilde C_N - \widetilde D_N)\right].
\end{equation}
Recall 
the following version of the 
Cauchy-Schwarz inequality: for any two $(2N) \times (2N)$ matrices $A_N$ and $B_N$
\begin{equation}
  \label{eq-201018a}
\frac{1}{2N}|\tr (A_NB_N)| \le \frac{1}{2N}\sqrt{\tr (A_N^*A_N)} \cdot \sqrt{\tr (B_N^* B_N)} \le \|A_N\| \cdot \frac{1}{\sqrt{2N}}\|B_N\|_{{\rm HS}}.
\end{equation}
Since 
for any Hermitian matrix $H_N$ one has $\|(\xi - H_N)^{-1}\| \le 1/|\Im (\xi)|$,
the claim follows from \eqref{eq-201018b} upon using
\eqref{eq-201018a} with $A_N= \left(\xi - \widetilde D_N\right)^{-1} \cdot \left(\xi - \widetilde C_N\right)^{-1}$ and
$B_N=\widetilde C_N-\widetilde D_N$.
\end{proof}

As a last preliminary step, we need the following easy lemma.
\begin{lemma}
  \label{lem-(b)}
  For any probability measure $\mu$, 
  %\item[(b)] For Lebesgue a.e.~$z \in B_\C(0,R_0/2)$,
\begin{equation}\label{eq:regularity}
\lim_{\vep \downarrow 0} \int \log |x-z| {\bf 1}_{\{|x-z| \le \vep\}} d\mu(x) =0
\end{equation}
for \corA{Lebesgue} almost every $z\in \C$.
\end{lemma}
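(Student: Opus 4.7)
My plan is to reduce the pointwise a.e.\ convergence to a local $L^1$ estimate combined with a monotonicity argument in $\vep$.

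\textbf{Step 1 (monotonicity).} Set $f_\vep(z) := \int \log|x-z|\, {\bf 1}_{\{|x-z|\le \vep\}}\, d\mu(x)$. For $\vep \in (0,1)$ the integrand is nonpositive, so $f_\vep(z) \le 0$. Whenever $0 < \vep_1 < \vep_2 < 1$,
\[
f_{\vep_1}(z) - f_{\vep_2}(z) \;=\; -\int_{\vep_1 < |x-z| \le \vep_2} \log|x-z|\, d\mu(x) \;\ge\; 0,
\]
so $|f_\vep(z)|$ is monotone nonincreasing as $\vep \downarrow 0$. Consequently the pointwise limit $f(z) := \lim_{\vep \downarrow 0} f_\vep(z)$ exists in $[-\infty, 0]$, and $|f(z)| \le |f_\vep(z)|$ for every $\vep \in (0,1)$.

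\textbf{Step 2 (local $L^1$ bound).} Fix a bounded Borel set $B \subset \C$. Applying Tonelli to the nonnegative integrand $|\log|x-z||\, {\bf 1}_{\{|x-z|\le \vep\}}$ and using translation invariance together with $\mu(\C)=1$,
\[
\int_B |f_\vep(z)|\, dz \;\le\; \int d\mu(x) \int_{B(x,\vep)} |\log|x-z||\, dz \;=\; 2\pi \int_0^\vep r|\log r|\, dr \;=\; \pi \vep^2 \bigl(|\log \vep| + \tfrac{1}{2}\bigr).
\]
This bound tends to $0$ as $\vep \downarrow 0$.

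\textbf{Step 3 (conclusion).} Combining Steps 1 and 2,
\[
0 \;\le\; \int_B |f(z)|\, dz \;\le\; \int_B |f_\vep(z)|\, dz \;\xrightarrow[\vep \downarrow 0]{}\; 0,
\]
so $f \equiv 0$ Lebesgue-a.e.\ on $B$. Since $B$ is an arbitrary bounded Borel subset of $\C$, we conclude that $f_\vep(z) \to 0$ for Lebesgue a.e.\ $z\in\C$, as required.

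The argument is elementary and there is no real obstacle; the only delicate point is the monotonicity observation of Step 1, which upgrades the local $L^1$ decay of Step 2 directly to pointwise a.e.\ convergence, bypassing the usual need to pass to a subsequence.
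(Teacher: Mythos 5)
Your proof is correct and follows essentially the same route as the paper's: a Tonelli/Fubini computation giving the local $L^1$ bound $\int_B |f_\vep|\,dz \lesssim \vep^2|\log\vep|$, combined with monotonicity of $|f_\vep(z)|$ in $\vep$ to upgrade this to pointwise a.e.\ convergence. Your Step 3 is a slightly cleaner way to finish (directly concluding $\int_B |f|\,dz = 0$ from $|f| \le |f_\vep|$), whereas the paper invokes Markov's inequality on the superlevel sets $\{F(z,\vep) > \delta\}$ and then lets $\delta \downarrow 0$; the two are equivalent.
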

\begin{proof} 
  For $\vep<1$, set
  $F(z,\vep):=\int \log(1/ |x-z|) {\bf 1}_{\{|x-z| \le \vep\}} d\mu(x)$. 
  Fix $z_0\in C$. Note that, by Fubini's theorem,
  $$ \int_{B_\C(z_0,1)} F(z,\vep)dz< \pi\vep^2\log (1/\vep)
  \to_{\vep\to 0} 0.$$
  In particular, for any $\delta>0$,
  with ${\mathcal A}_\vep(\delta):=\{z\in B_\C(z_0,1): F(z,\vep)>\delta\}$,
  we obtain that 
  $$\mbox{\rm Leb}({\mathcal A}_\vep(\delta))\to_{\vep \to 0}
  0.$$
  In particular, $\mbox{\rm Leb}(\cap_{\vep<1}{\mathcal A}_\vep(\delta))=0$.
  Using the monotonicity of $F(z,\vep)$, we conclude that for
  Lebesgue almost every $ z\in B_\C(z_0,1)$,
  $\limsup_{\vep\to 0} F(z,\vep)\leq \delta$. Taking a sequence $\delta_n\to 0$
  gives \eqref{eq:regularity}, first for Lebesgue almost every $ z\in B_\C(z_0,1)$, and then for almost every $z$.
\end{proof}

We are now ready to prove Theorem \ref{thm:GWZ}.

\begin{proof}[Proof of Theorem \ref{thm:GWZ}]
To establish \eqref{eq:conv-log-pot} we first claim
that $\nu^z_{A_N+N^{-\gamma} E_N} \Rightarrow \mu_z$, in probability, 
for Lebesgue a.e.~$z \in B_\C(0,R_0)$, 
where $\mu_z$ is the law of $|X-z|$ and $X \sim \mu$. 
The argument is similar to that employed in the proof of Theorem
\ref{thm:main}. 
Write
$B_N:= A_N+ N^{-\gamma} E_N$ and
$B_N(z):=B_N -z \Id_N$.
We have that $\nu^z_{A_N}\Rightarrow \mu_z$ by assumption (b), 
while
% $\gamma >\frac12$, 
Assumption \ref{ass:noise-matrix}(i) and Markov's inequality imply that 
\begin{equation}\label{eq:HS-Markov}
\P\left( \|E_N\|_{{\rm HS}} \ge N^{1+\frac{\gamma -\frac12}{2}} \right) \le N^{-(\gamma -\frac12)} \cdot \frac{\E \|E_N\|_{{\rm HS}}^2}{N^2}= o(1), 
\end{equation}
for any $\gamma>1/2$. 
On the other hand, by the Hoffman-Wielandt inequality, see
\cite[Lemma 2.1.19]{AGZ}, the map
$D_N\mapsto L_N^D$, viewed as a map from the space of $N\times N$
Hermitian matrices 
equipped with the normalized
Hilbert-Schmidt norm $N^{-1/2} \|\cdot\|_{\rm HS}$ 
to the space of probability measures
equipped with the weak topology, is continuous. 
Note that for any matrix,
the singular values of $A$ are the same as the modulus of the eigenvalues
of the matrix
\[\left(\begin{array}{ll}
    0&A\\A^*&0
\end{array}\right),\]
up to double the multiplicity for each singular value.
In particular,
%$A_N(z)=A_N-zI_N$ has uniformly bounded operator norm,
%only finitely many nonzero elements in each row and
%they are all uniformly bounded, 
%we have that 
\[N^{-1/2}\|(B_N(z)B_N(z)^*)^{1/2}
  -(A_N(z)A_N(z)^*)^{1/2}\|_{\rm HS}
\leq  CN^{-(\gamma+1/2)}\|E_N\|_{\rm HS} 
%+
%N^{-(2\gamma+1/2)}(\|E_N\|_{\rm HS})^2
\to_{N\to \infty} 0,\]
in probability, by \eqref{eq:HS-Markov}.
%Since the singular values of $B_N(z)$ are the square roots
%of the eigenvalues of $B_N(z)B_N(z)^*$, and
%the
%map $x\mapsto \sqrt{x}$ is continuous on $\R$, 
We conclude from that and the above mentioned continuity of the 
empirical measure in  the (normalized) Hilbert-Schmidt norm 
that 
\begin{equation}
  \label{eq-laundry}
  \nu^z_{B_N}\Rightarrow \mu_z, \quad \mbox{\rm  in probability},
\end{equation}
as claimed.

To complete the proof we need to extend the convergence  of
$\nu^z_{B_N}$\
%eqref{eq:sing-dist-conv} 
%to yield 
to the convergence of the integral of $\log(\cdot)$ against this measure. 
To this end, using \eqref{eq:HS-Markov} again and the fact that
the operator norm of $A_N(z)$ is bounded,
we see that there exists a compact set $\mathbb{K} \subset\R$ such that for any $z \in B_\C(0,R_0)$
\[
\P(\nu^z_{B_N}(\mathbb{K}^c) >0) =o(1).
\]
Hence, for any $\vep >0$,
\[
\int_\vep^\infty \log(x) d\nu_{B_N}^z(x) \to \int_\vep^\infty \log(x) d\mu_z(x) = \int \log(|x-z|) {\bf 1}_{\{|x-z| > \vep\}}d\mu(x), \qquad \text{ in probability},
\]
for Lebesgue a.e.~$z \in B_\C(0,R_0)$. Note  that
\[
\mathcal{L}_{L_N^{B}}(z) = \frac{1}{N}\log |\det(B_N(z))| = \int \log(x) d\nu_{B_N}^z(x).
\]
Thus, \eqref{eq:regularity}
together with \eqref{eq-laundry} imply that it only remains to show that given any $\delta> 0$, there exists $\vep_0(\delta)$ such that for any $\vep \le \vep_0(\delta)$
\begin{equation}\label{eq:log-near-0-negligible-1}
\limsup_{N \to \infty} \P\left( \left| \int_0^\vep \log(x) d\nu_{B_N}^z(x)\right| \ge C_0\delta\right) =0,
\end{equation}
for Lebesgue a.e.~$z \in B_\C(0,R_0)$ and some large constant $C_0$. To prove 
this, we
 first show that an analogue of \eqref{eq:log-near-0-negligible-1} holds for the empirical measure of the singular values of $A_N+\Delta_N$. 

Turning to do this task, using \eqref{eq:E-op-norm} and 
arguing similarly to  the steps leading to \eqref{eq-laundry},
we obtain that 
\[
 \nu_{A_N+\Delta_N}^z \Rightarrow \mu_z, \qquad \text{ in probability},
\]
for Lebesgue a.e.~$z \in B_\C(0,R_0)$, and further,
%. Hence, same lines of argument as above further implies that, 
for any $\vep >0$,
\begin{equation}\label{eq:sing-val-conv}
\int_\vep^\infty \log (x) d\nu_{A_N+\Delta_N}^z(x) \to \int_\vep^\infty \log|x-z| d\mu_z(x), \qquad \text{ in probability},
\end{equation}
for Lebesgue a.e.~$z \in B_\C(0,R_0)$. 
Together with assumptions (b)-(c), we conclude that
for Lebesgue a.e.~$z \in B_\C(0,R_0)$, given any $\delta >0$, 
there exists $\vep_0(\delta)$ such that for all $\vep \le 2 \vep_0(\delta)$,
\begin{equation}\label{eq:log-near-0-negligible}
\limsup_{N \to \infty} \P\left( \left| \int_0^\vep \log(x) d\nu_{A_N+\Delta_N}^z(x)\right| \ge \delta\right) =0.
\end{equation}
Having shown \eqref{eq:log-near-0-negligible} we now proceed to the proof of \eqref{eq:log-near-0-negligible-1}. Using Assumption \ref{ass:noise-matrix}(ii) we see that there exists a sufficiently large constant $\kappa_\star$ such that
\[
\P(s_{\min}(B_N(z)) \le N^{-\kappa_\star}) = \P(s_{\min}(N^\gamma A_N(z)+E_N) \le N^{\gamma - \kappa_\star}) =o(1),
\]
where $s_{\min}(H)$ is the minimal singular value of a matrix $H$.
Hence, 
\begin{equation}\label{eq:log-near-0-negligible-2}
\limsup_{N \to \infty} \P\left( \left|\int_0^{N^{-\kappa_\star}} \log (x) d\nu_{B_N}^z(x)\right| \ge \delta\right) =0.
\end{equation}
Now to control the integral of $\log(\cdot)$ over $(N^{-\kappa_\star}, \vep)$ we apply \eqref{eq:stieltjes-diff} to deduce that 
\begin{equation}\label{eq:stieltjes-diff-1}
|G_{A_N(z)+\Delta_N}(x+\mathrm{i} \tau) - G_{B_N(z)}(x+\mathrm{i} \tau)| \le \frac{N^{-\gamma}\|E_N\|_{{\rm HS}}+\|\Delta_N\|_{{\rm HS}}}{\sqrt{N}\tau^2} \le N^{-\delta'/2},
\end{equation}
on the event 
\[
\Omega_N:= \left\{\|E_N\|_{{\rm HS}} \le N^{1+\frac{\gamma -\frac12}{2}}\right\} \cap \{\|\Delta_N\| \le N^{-\gamma_0}\},
\]
where $\tau=N^{-\delta'/4}$ and $\delta'=\min\{\frac12(\gamma - \frac12), \gamma_0\}$. 

Let $\widetilde \nu_{A_N+\Delta_N}^z$ and $\widetilde \nu_{B_N}^z$ denote
the symmetrized versions of the probability measures $ \nu_{A_N+\Delta_N}^z$ and $ \nu_{B_N}^z$, respectively. 
%Denote, $\nu_{\model_N}^z$ and $\nu_{\hat\model_N}^z$ to be the empirical measure of the singular values of $\model_N - z\Id$ and $\hat\model_N- z\Id$, respectively.
Setting $\varrho = N^{-\delta'/8}$, $\kappa=\delta'/16$, and using \eqref{eq:pr-stielt-ubd}-\eqref{eq:pr-stielt-lbd} \corA{and \eqref{eq:stieltjes-diff-1}} in the second inequality,
we have that
\begin{align}
-\int_{N^{-\kappa_\star}}^{N^{-\kappa}} \log (x) d\tilde\nu_{B_N}^z(x) & \le \kappa_\star \log N \cdot \nu_{B_N}^z([N^{-\kappa_\star}, N^{-\kappa}])\notag\\
 & \le \kappa_\star \log N \left(2 N^{- \delta' /8} \corA{+N^{-\delta'/2}} +2 \tilde \nu_{A_N+\Delta_N}^z([N^{-\kappa_\star}- 2{\varrho}, N^{-\kappa}+2{\varrho}]) \right)\notag\\
 & \le  \kappa_\star \log N \left(\corA{3} N^{- \delta' /8} +
 4\tilde \nu_{A_N+\Delta_N}^z([0, 2N^{-\kappa}]) \right) \notag\\
 & \le \corA{3} \kappa_\star \log N \cdot N^{-\delta' /8}-\frac{8\kappa_\star}{\kappa} \int_0^{2N^{-\kappa}} \log (x) d\tilde
 \nu_{A_N+\Delta_N}^z(x), \label{eq:log-near-0-1}
\end{align}
on $\Omega_N$, for all large $N$, where in the third inequality we used the symmetry of $\tilde \nu_{A_N+\Delta_N}^z$
%second last step we have used the fact that $\nu_{\hat\model_N}^z$ is supported on $[0,\infty)$
and $\varrho= o(N^{-\kappa})$.

It remains to bound the integral of $\log(\cdot)$ over $(N^{-\kappa}, \vep)$.
Toward this,
using integration by parts we note that, for $0 \le a_1 < a_2  <1$ and any probability measure $\mu$ on $\R$,
\begin{equation}\label{eq:log-by-parts}
-\int_{a_1}^{a_2} \log (x) d\mu(x) = -\log(a_2) \mu([a_1,a_2]) + \int_{a_1}^{a_2} \frac{\mu([a_1,t])}{t} dt.
\end{equation}
Arguing as in \eqref{eq:log-near-0-1} we obtain
\begin{align}
\int_{N^{-\kappa}}^\vep \frac{\tilde
  \nu_{B_N}^z([N^{-\kappa}, t])}{t} dt& \le \corA{3} N^{- \delta' /8}
  \int_{N^{-\kappa}}^\vep \frac{1}{t} dt +
  \int_{N^{-\kappa}}^\vep \frac{\tilde \nu_{A_N+\Delta_N}^z([N^{-\kappa}/2, t+N^{-\kappa}])}{t} dt \notag\\
& \le \corA{3 } \kappa N^{-\delta' /8} \cdot \log N + \int_{N^{-\kappa}/2}^{2\vep} \frac{\tilde \nu_{A_N+\Delta_N}^z([N^{-\kappa}/2, t])}{t} dt,  \label{eq:log-near-0-2}
\end{align}
where in the last step we have used the fact that $t + N^{-\kappa} \le 2t$ for any $t \ge N^{-\kappa}$, and a change of variables.  Similar reasoning yields that
\begin{equation}
-\log(\vep) \tilde \nu_{B_N}^z([N^{-\kappa},\vep]) \le - \log(\vep) \left( {\corA{3 }  N^{-\delta'/8}} +
\tilde \nu_{A_N+\Delta_N}^z([N^{-\kappa}/2,2\vep])\right). \label{eq:log-near-0-3}
\end{equation}
Thus combining \eqref{eq:log-near-0-1}, and \eqref{eq:log-near-0-2}-\eqref{eq:log-near-0-3}, and using \eqref{eq:log-by-parts} we deduce that for $\vep \le\vep_0(\delta)$ sufficiently small and all large $N$,
\[
-\int_{N^{-\kappa_\star}}^{\vep} \log (x) d\tilde
\nu_{B_N}^z(x)  \le C_0' \left[ \log N \cdot N^{- \delta'/8}-\int_0^{2\vep}\log(x) d\tilde \nu_{A_N+\Delta_N}^z(dx)\right],
\]
on the event $\Omega_N$, where $C_0'$ is some large constant. 
Finally, \eqref{eq:HS-Markov} and \eqref{eq:E-op-norm} imply that $\P(\Omega_N^c)=o(1)$. Therefore, combining
\eqref{eq:log-near-0-negligible} and \eqref{eq:log-near-0-negligible-2}, the claim in \eqref{eq:log-near-0-negligible-1} now follows. This completes the proof of the theorem.
\end{proof}

\appendix
\section{Some algebraic facts}\label{sec:alg-results}
{In this section we collect a couple of standard matrix results which have been used in the proofs appearing in Section \ref{sec:proof-existence-of-Delta}.}

{The first result shows that the determinant of the sum of the two matrices can be expressed as a linear combination of products of the determinants of appropriate sub-matrices. The proof follows from the definition of the determinant,
  see e.g. \cite{college}.} {We adopt the convention that} the determinant of the matrix of size zero is one. For an $N\times N$ matrix $A$, and $\row,\col\subseteq [N]$ we write $A[X;Y]$ for the sub-matrix of $A$ which consists of the rows in $\row$ and the columns in $\col$.
\begin{lemma}\label{lem:cauchy-binet}
{ For any $N\times N$ matrices $A$ and $B$ we have}
\begin{equation}\label{eq:det_decomposition}
        \det(A+B) = \sum_{\substack{\row,\col \subset [N] \\ |\row|=|\col|}} (-1)^{\sgn(\sigma_\row)\sgn(\sigma_\col)} \det(A[{\row}^c; {\col}^c])\det(B[\row; \col]),
\end{equation}
where ${\row}^c:=[N]\setminus \row$, ${\col}^c:=[N] \setminus \col$ and $\sigma_Z$ for $Z\in\{\row,\col\}$ is the permutation on $[N]$ which places all the elements of $Z$ before all the elements of ${Z}^c$, but preserves the order of elements within the two sets.
\end{lemma}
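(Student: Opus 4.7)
The plan is a direct unwinding of the Leibniz definition of the determinant, followed by careful sign bookkeeping. Starting from
\[
\det(A+B) = \sum_{\pi \in \PG[N]} \sgn(\pi) \prod_{i=1}^N \bigl(A_{i,\pi(i)} + B_{i,\pi(i)}\bigr),
\]
I would first distribute the product over the sum. Each term of the resulting expansion is indexed by the subset $\row \subseteq [N]$ of indices at which the $B$-entry is chosen. Interchanging the order of summation yields
\[
\det(A+B) = \sum_{\row \subseteq [N]} \sum_{\pi \in \PG[N]} \sgn(\pi) \prod_{i \in \row} B_{i,\pi(i)} \prod_{i \in \row^c} A_{i,\pi(i)}.
\]

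Next I would reorganize the inner sum according to the image $\col := \pi(\row)$, which automatically satisfies $|\col| = |\row|$; any $\pi$ with $\pi(\row) = \col$ is determined uniquely by a bijection $\tau_1 : \row \to \col$ together with a bijection $\tau_2 : \row^c \to \col^c$. The only genuine point is the factorization of $\sgn(\pi)$ under this decomposition. Using the order-preserving permutations $\sigma_\row$ and $\sigma_\col$ from the statement, one has $\pi = \sigma_\col^{-1} \circ (\tilde\tau_1 \oplus \tilde\tau_2) \circ \sigma_\row$, where $\tilde\tau_1$ and $\tilde\tau_2$ are the permutations of $\{1,\dots,|\row|\}$ and $\{|\row|+1,\dots,N\}$ induced by $\tau_1,\tau_2$ through $\sigma_\row,\sigma_\col$. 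Multiplicativity of the sign then gives
\[
\sgn(\pi) \;=\; \sgn(\sigma_\row)\,\sgn(\sigma_\col)\,\sgn(\tilde\tau_1)\,\sgn(\tilde\tau_2).
\]

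Substituting this factorization, the double sum over $(\tilde\tau_1, \tilde\tau_2)$ decouples, and by the Leibniz formula applied to the sub-matrices it produces exactly $\det(B[\row;\col]) \cdot \det(A[\row^c;\col^c])$. Pulling $\sgn(\sigma_\row)\sgn(\sigma_\col)$ outside and restoring the outer double sum over pairs $(\row,\col)$ with $|\row|=|\col|$ yields \eqref{eq:det_decomposition}. The main (and only) obstacle is the sign bookkeeping in the factorization step; once that decomposition of $\pi$ through $\sigma_\row, \sigma_\col$ is fixed, everything else is a routine reindexing. Since this is a classical matrix identity (cf.\ \cite{college}), I would present only the sketch above and omit the routine verification, as the authors also do.
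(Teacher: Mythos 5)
Your proof is correct and takes the same classical route that the paper relegates to the citation \cite{college}: expand the Leibniz formula for $\det(A+B)$, collect terms by the set $\row$ of rows where $B$ is chosen and the image $\col=\pi(\row)$, and factor the sign through $\sigma_\row,\sigma_\col$. One small point worth flagging: the paper's prefactor $(-1)^{\sgn(\sigma_\row)\sgn(\sigma_\col)}$ is an abuse of notation and should be read as the product $\sgn(\sigma_\row)\sgn(\sigma_\col)\in\{\pm1\}$, which is exactly what your sign bookkeeping produces.
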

The next lemma evaluates 
the determinant of any sub-matrix of a bidiagonal matrix.

\begin{lemma}[{\cite[Lemma 2.2]{FPZ}}]\label{lem:bidiagonal-det}
Let $A_N$ be an upper bi-diagonal matrix and $X, Y \subset [N]$ such that $|X|=|Y|$. Then $\det(A_N[X;Y])$ equals the product of the diagonal entries of $A_N[X;Y]$. 
\end{lemma}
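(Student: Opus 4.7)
The plan is to expand the determinant via the Leibniz formula and show that, because $A_N$ is upper bidiagonal, the only permutation contributing a nonzero term is the identity. Write $X=\{i_1<i_2<\cdots<i_k\}$ and $Y=\{j_1<j_2<\cdots<j_k\}$. Since $A_N$ is upper bidiagonal, $(A_N)_{p,q}\neq 0$ forces $q\in\{p,p+1\}$. Therefore
\[
\det(A_N[X;Y])=\sum_{\sigma\in\mathfrak{S}_k}\operatorname{sgn}(\sigma)\prod_{p=1}^k (A_N)_{i_p,\,j_{\sigma(p)}},
\]
and a permutation $\sigma$ contributes only if $j_{\sigma(p)}\in\{i_p,i_p+1\}$ for every $p\in[k]$.

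The key step is to rule out every non-identity $\sigma$. Let $p^\star$ be the smallest index with $\sigma(p^\star)\neq p^\star$; then $\sigma$ fixes $\{1,\dots,p^\star-1\}$, so in particular $\sigma(p^\star)>p^\star$ and there exists $q>p^\star$ with $\sigma(q)=p^\star$. The requirement $(A_N)_{i_q,j_{p^\star}}\neq 0$ gives $j_{p^\star}\in\{i_q,i_q+1\}$, and since $q>p^\star$ forces $i_q>i_{p^\star}$, we obtain $j_{p^\star}\geq i_{p^\star}+1$. On the other hand, as the $j$'s are strictly increasing, $j_{\sigma(p^\star)}>j_{p^\star}\geq i_{p^\star}+1$, which contradicts the required $j_{\sigma(p^\star)}\in\{i_{p^\star},i_{p^\star}+1\}$. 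Hence no non-identity $\sigma$ contributes, and the sum collapses to $\prod_{p=1}^k (A_N)_{i_p,j_p}$, which is precisely the product of the diagonal entries of $A_N[X;Y]$.

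There is no substantive obstacle: the argument is a short combinatorial check. The only mild subtlety is that the identity permutation's contribution $\prod_p (A_N)_{i_p,j_p}$ may itself vanish (when some diagonal entry of $A_N[X;Y]$ is zero because $j_p\notin\{i_p,i_p+1\}$), but then both sides of the claimed identity are zero and the statement holds trivially. The argument above covers the generic case; the degenerate case is subsumed by interpreting the product-of-diagonals as zero whenever any diagonal entry vanishes.
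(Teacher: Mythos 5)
Your proof is correct. Note, however, that the paper does not supply its own argument for this lemma---it simply cites \cite[Lemma 2.2]{FPZ}---so there is no in-paper proof to compare against. Your Leibniz-expansion argument is the standard self-contained route: the bidiagonal support constraint $q\in\{p,p+1\}$, combined with the strict monotonicity of the row indices $\{i_p\}$ and column indices $\{j_p\}$, correctly rules out every non-identity permutation via the ``first displaced index'' trick, leaving only the diagonal product. One small remark: the closing paragraph about the ``degenerate case'' is unnecessary. The argument already establishes $\det(A_N[X;Y])=\prod_p (A_N)_{i_p,j_p}$ unconditionally, since the elimination of non-identity permutations does not require the identity term to be nonzero; whether that product then vanishes is irrelevant to the identity being asserted.
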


The next lemma, which follows readily from Lemma 
\ref{lem:bidiagonal-det},
evaluates
the determinant of any sub-matrix of a bidiagonal Toeplitz matrix.
\begin{lemma}[{\cite[Lemma 2.3]{FPZ}}]\label{lem:bidiagonal-det-1}
Let $A_N = J_N+ \mathfrak{z} \Id_N$, $\mathfrak{z} \in \C$, $X =\{x_1 <x_2<\cdots <x_k\} \subset [N]$, and $Y=\{y_1<y_2<\ldots<y_k\}\subset [N]$. Then, with
$y_{k+1}=\infty$,
\[
\det(A_N[{X}^c; {Y}^c]) = \mathfrak{z}^{y_1-1} \cdot \left(\prod_{i=2}^k  \mathfrak{z}^{y_i-x_{i-1}-1}\right) \cdot \mathfrak{z}^{N- x_k} {\bf 1} \left\{  y_i \le x_i < y_{i+1}, i \in [k] \right\}.
\]
\end{lemma}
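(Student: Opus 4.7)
The plan is to invoke Lemma~\ref{lem:bidiagonal-det} directly, since $A_N = J_N + \mathfrak{z}\Id_N$ is upper bidiagonal. Writing $X^c = \{x'_1 < \cdots < x'_{N-k}\}$ and $Y^c = \{y'_1 < \cdots < y'_{N-k}\}$, that lemma gives
\[
\det(A_N[X^c; Y^c]) = \prod_{i=1}^{N-k} (A_N)_{x'_i, y'_i}.
\]
Each diagonal factor equals $\mathfrak{z}$ if $y'_i = x'_i$, equals $1$ if $y'_i = x'_i + 1$, and vanishes otherwise. Hence the product is nonzero exactly when $y'_i - x'_i \in \{0,1\}$ for every $i \in [N-k]$, and in that case it equals $\mathfrak{z}^{m_0}$ with $m_0 := \#\{i : y'_i = x'_i\}$. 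This reduces the proof to two claims: (i) the nonvanishing condition is equivalent to the interleaving $y_i \le x_i < y_{i+1}$ for all $i \in [k]$ (with $y_{k+1} := \infty$), and (ii) under that condition, $m_0$ equals the claimed exponent.

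The main obstacle is the combinatorial equivalence in (i). I would first reformulate the interleaving condition as the requirement $|Y \cap [1, x_i]| = i$ for every $i \in [k]$. For the forward direction, fix $i \in [k]$, set $j = x_i$, and observe that because $|X \cap [1, j-1]| = i-1$, the $(j-i)$-th and $(j-i+1)$-th elements of $X^c$ satisfy $x'_{j-i} \le j-1$ and $x'_{j-i+1} \ge j+1$; combining these bounds with $y'_{j-i} \le x'_{j-i} + 1$ and $y'_{j-i+1} \ge x'_{j-i+1}$ sandwiches $|Y^c \cap [1, j]|$ at exactly $j-i$, yielding $|Y \cap [1, j]| = i$. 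The reverse direction runs in parallel: fix $\ell \in [N-k]$, set $j = x'_\ell$ and $i = |X \cap [1, j]|$, and read off $y'_\ell \in \{j, j+1\}$ from the interleaving equalities at indices $i$ and $i+1$. Boundary cases $i \in \{0, k\}$ are handled by the conventions $|X \cap [1, 0]| = 0$ and $|Y| = k$.

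Once the equivalence is in hand, (ii) is a short count. Setting $m_1 := \#\{i : y'_i = x'_i + 1\}$, one has $m_0 + m_1 = N - k$ and
\[
m_1 = \sum_{\ell=1}^{N-k}(y'_\ell - x'_\ell) = \sum_{j \in Y^c} j - \sum_{j \in X^c} j = \sum_{i=1}^k x_i - \sum_{i=1}^k y_i,
\]
so $m_0 = N - k + \sum_i y_i - \sum_i x_i$. A routine telescoping rearrangement shows this matches $(y_1 - 1) + \sum_{i=2}^k (y_i - x_{i-1} - 1) + (N - x_k)$, completing the proof.
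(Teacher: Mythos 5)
Your proof is correct and takes exactly the route the paper indicates, namely reducing to Lemma~\ref{lem:bidiagonal-det} (product of diagonal entries of a sub-matrix of a bidiagonal matrix) and then carrying out the bookkeeping. The equivalence in your claim (i) between the condition $y'_i - x'_i \in \{0,1\}$ for all $i$ and the interleaving $y_i \le x_i < y_{i+1}$ checks out (including the edge cases $j-i=0$ and $j-i=N-k$ in the forward direction, which you do not spell out but which go through), and the exponent count in claim (ii) via $m_0 = N-k+\sum_i y_i - \sum_i x_i$ telescopes correctly to $(y_1-1)+\sum_{i=2}^k(y_i-x_{i-1}-1)+(N-x_k)$.
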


%For $A_N = \mathfrak{z} J_N + \Id_N$ the determinant of $A_N[X^c; Y^c]$ is evaluated in \cite[Lemma 2.3]{FPZ}. Lemma \ref{lem:bidiagonal-det-1} follows from there. 

%Here we provide a short outline.

%\begin{proof}[Proof of Lemma \ref{lem:bidiagonal-det-1}]
%Writing $\COMP{X}=\{w_1 <w_2< \cdots <w_{N-k}\}$, $\COMP{Y}=\{z_1 < z_2< \cdots <z_{N-k}\}$ and using Lemma \ref{lem:bidiagonal-det} we find that 
%\begin{equation}\label{eq:det-simplify}
%\det(A_n[\COMP{X}; \COMP{Y}] )= \prod_{\ell=1}^{N-k} (A_N)_{w_\ell, z_\ell}.
%\end{equation}
%Using the upper bi-diagonal structure of $A_n$ it can be deduced that the \abbr{RHS} of \eqref{eq:det-simplify} equals zero unless
%\[
%y_1 \le x_1 \lneq y_2  \le x_2 \lneq \cdots \lneq y_k \le x_k.
%\]
%Now the rest is immediate and further details are omitted.
%\end{proof}

%\subsection*{\red{Comments}} Add difficulties in treating infinite symbols. Slowly growing bandwidth. Study of outliers. Connection to pseduospectra. Originality of the methods. Not a culmination of works \red{[2,6,7]}.

\end{document}